\definecolor{navy}{HTML}{00009F}
\def\a{\mathbf{a}}
\def\b{\mathbf{b}}
\def\Z{\mathbb{Z}}
\def\m{\mathbf{m}}
\renewcommand{\le}{\leqslant}
\newcommand{\coloneqq}{\mathrel{\mathop:}=}
\newcommand{\eqqcolon}{=\mathrel{\mathop:}}
\newtheorem{theorem}{\hspace*{\parindent}Theorem}
\newtheorem{lemma}{\hspace*{\parindent}Lemma}
\newtheorem{corollary}{\hspace*{\parindent}Corollary}
\newtheorem*{corollary*}{\hspace*{\parindent}Corollary}
\DeclareMathOperator*{\sign}{sign}
\title{Trigonometric identities: from Hermite via Meijer, N{\o}rlund and Braaksma to Chu and Johnson and beyond}
\author{Alexander Dyachenko$^{\rm a}$~~and Dmitrii\:Karp$^{\rm b}$\footnote{E-mail: A.\:Dyachenko --  \emph{diachenko@sfedu.ru},  D. Karp -- \emph{dimkrp@gmail.com}}
\\[10pt]
\\
\small{\textit{$\phantom{1}^a$Keldysh Institute of Applied Mathematics, Moscow, Russia}}
\\
\small{\textit{$\phantom{1}^b$Department of Mathematics,  Holon Institute of Technology, Holon, Israel}}
}
\date{}
\begin{document}
\maketitle

\begin{abstract}
Known already to the ancient Greeks, today trigonometric identities come in a large variety of tastes and flavours. In this large family there is a subfamily of interpolation-like identities  discovered by Hermite and revived rather recently in two independent papers, one by Wenchang Chu and the other by Warren Johnson exploring various forms and generalizations of Hermite's results.  The goal of this work inspired by these two articles is twofold. The first goal is to fill a gap in the references from the above papers and  exhibit various trigonometric identities discovered by Meijer, N{\o}rlund and Braaksma between 1940 and 1962 in the context of analytic continuation of Mellin-Barnes integrals and relations between different solutions of the generalized hypergeometric differential equation. Our second goal is to present some extensions of Chu's and Johnson's results by combining them with the ideas of Meijer and Braaksma adding certain sum manipulations and facts from the complex analysis.  We unify and systematize various known and new identities and illustrate our results with numerous explicit examples.
\end{abstract}

\bigskip

Keywords: \emph{Trigonometric identity, Hermite's cotangent identity, sine identity, partial fraction, symmetric polynomial}

\bigskip

MSC2020: 33B10


\section{Introduction}

In a remarkable paper \cite{Johnson}, Warren Johnson found a partial fraction expansions for the ratio
\begin{equation}\label{eq:JohnsonRatio}
\frac{F(\sin(z), \cos(z))}{\prod_{j=1}^{n}\sin(z-a_j)},    
\end{equation}
where $F$ is a polynomial in two variables of the form 
$$
F(\sin(z),\cos(z))=\sum_{r,s\ge0\atop r+s\le n} t_{r,s}(\sin(z))^r(\cos(z))^s
$$
He showed that it is a far-reaching generalization of the cotangent identity due to Hermite \cite[Theorem~1]{Johnson} and includes a beautiful overview of the early history of the cotangent and many related trigonometric identities.  Johnson's paper appeared in April's 2010 issue of American Mathematical Monthly, but was obviously conceived and started  much earlier and, as noted by the author, "has gone a long way beyond my original conception".  This might explain why the author missed a short paper by Wenchang Chu \cite{Chu} published in AMS Proceedings in January 2008. Chu considers essentially the same problem just using a slightly more general form of the initial function, namely 
\begin{equation}\label{eq:ChuRatio}
\frac{P(e^{iz})}{\prod_{j=1}^{n}\sin(z-a_j)},
\end{equation}
with $P(w)$ being a Laurent polynomial in $w$ consisting of the terms $w^k$ with $|k|\le{n}$.
Chu found a partial fraction decomposition of this ratio and listed a number of particular
cases of his partial fraction decomposition showing how they immediately lead to various
identities discovered in the literature, most notably in \cite{Calogero,Gustafson87,Gustafson94,Mohlenkamp}.  We remark that one can recover Hermite's
cotangent identity \cite[Theorem~1]{Johnson} -- the starting point of Johnson's paper -- by setting $\alpha_j=\beta_j+\pi/2$ in \cite[(8)]{Chu}.  Surprisingly enough, Chu's list of references has empty intersection with that of Johnson! One striking example is the following identity 
\begin{equation}\label{eq:genPtolemy}
\sum\limits_{t=1}^{m}\frac{\prod_{j=1}^{m}\sin(a_j-b_t)}{\prod_{j=1\atop j\ne t}^{m}\sin(b_j-b_t)}=\sin(a_1+\cdots+a_m-b_1-\cdots-b_m),
\end{equation}
which, according to Johnson \cite[(4.2)]{Johnson}, is due to Hermite \cite{Hermite}.  This identity can be viewed as a generalization of Ptolemy's theorem (written in the  trigonometric form): for a quadrilateral inscribed in a circle
the product of the lengths of its diagonals is equal to the sum of the products of the lengths
of the pairs of opposite sides. Chu \cite[(9)]{Chu} attributes the above formula to 
Gustafson~\cite[Lemma~5.10]{Gustafson94}. Jointly with E.G.\:Prilepkina, the second author rediscovered it in \cite[Corollary~3.7]{KPSIGMA2016} in the form of a gamma function identity, to which it reduces in view of Euler's reflection formula $\Gamma(z)\Gamma(1-z)=\pi/\sin(\pi{z})$. 

As it turned out identity~\eqref{eq:ChuRatio} was rediscovered several more times. Both Johnson and Chu missed the references to Meijer~\cite{Meijer}, N{\o}rlund \cite{Norlund} and Braaksma \cite{Braaksma}, where a number of related trigonometric identities have been found in connection with the study of the Mellin-Barnes contour integrals, but only in \cite{Norlund} in an explicit form.
One such identity allowed E.G.\:Prilepkina and the second author to simplify the analytic continuation of a particular case of Meijer's $G$-function in~\cite{KPITSF2023}.
A careful reading of~\cite{Meijer} and~\cite{Braaksma} shows that they in some sense go beyond Chu and Johnson. Namely, the degrees on the numerator polynomials $F$ in \eqref{eq:JohnsonRatio} and $P$ in \eqref{eq:ChuRatio} do not exceed $n$ -- the degree of the denominator  polynomial $\prod_{j=1}^{n}\sin(z-a_j)$, while Meijer and Braaksma found expansions for general degrees. 
These authors were interested in those identities not per se, but rather as one of the tools in deriving representations and expansions for the Mellin-Barnes integrals. The formulas are well hidden behind their complicated notation, they give no explicit examples and keep certain quantities defined implicitly as power series coefficients of some rational functions.

The purpose of this note is twofold. First, in the subsequent Section~2 we revisit Meijer's, N{\o}rlund's and Braaksma's identities explaining their notation and furnishing some explicit examples illustrating the power of their general formulas. We further explain the relation between the results of Meijer and Braaksma and mention their connection to the formulas contained in the seminal paper \cite{Norlund} by N{\o}rlund. Second, we extend the approach of Chu and present various generalizations of all previously mentioned trigonometric identities in Section~3.

\section{Identities of Braaksma and Meijer revisited}

Suppose $\a=(a_1,a_2,\ldots,a_r)$ and $\b=(b_1,b_2,\ldots,b_n)$ are real vectors and write $\varkappa=r-n$ for the difference of their sizes. Define 
\begin{equation}\label{eq:F-defined}
F(z)=\frac{\prod_{j=1}^{r}\sin(a_j-z)}{\prod_{j=1}^{n}\sin(b_j-z)}=\frac{\sin(\a-z)}{\sin(\b-z)},    
\end{equation}
where both here and henceforth $\sin(\a-z)=\prod_{j=1}^{r}\sin(a_j-z)$ and~$\sin(\b-z)=\prod_{j=1}^{n}\sin(b_j-z)$
. We also denote by~$\b_{[j]}$ the vector~$\b$ with its~$j$th component
removed:~$\b_{[j]}=(b_1,\dots,b_{j-1},b_{j+1},\dots,b_n)$.  In this section we will generally follow the notation
of Meijer and Braaksma, except for substitution $\pi{a_j}\to a_j$ and $\pi{b_j}\to b_j$ for all
indices $j$. In order to formulate Braaksma's result we need some preparation in which we will
recover certain details omitted in \cite{Braaksma}. Suppose $r\ge n-1$.  
For sufficiently large positive $\Im(z)$ we  have 
$$
F(z)=\sum_{j=0}^{\infty}C_{j}e^{i\delta_{j}z},
$$
where $\delta_0=-\varkappa$, $\delta_{j}=\delta_0+2{j}$.  Indeed, writing $\sin(z)=e^{iz}(1-e^{-2iz})/(2i)$ and expanding, we will have:
\begin{multline*}
F(z)=\underbrace{(2i)^{-\varkappa}e^{i(\sum{a_j}-\sum{b_j})}}_{C_{0}}e^{i\delta_{0}z}\frac{\prod_{j=1}^{r}\left(1-e^{-2ia_{j}}e^{2iz}\right)}{\prod_{j=1}^{n}\left(1-e^{-2ib_{j}e^{2iz}}\right)}
\\
=C_{0}e^{i\delta_{0}z}\left(A_0+A_1e^{2iz}+A_2e^{4iz}+\cdots+A_re^{2irz}\right)
\left(B_0+B_1e^{2iz}+B_2e^{4iz}+\cdots\right)
\\
=C_{0}e^{i\delta_{0}z}\left(A_0B_0+(A_1+B_1)e^{2iz}+(A_2+A_1B_1+B_2)e^{4iz}+\cdots
+e^{2ijz}\sum_{k+l=j}A_kB_l+\cdots\right),
\end{multline*}
where $A_0=B_0=1$, 
\begin{align*}
A_k
&=(-1)^ke_k\left(e^{-2ia_1},\ldots,e^{-2ia_r}\right)
=
(-1)^k\sum\nolimits_{0\le j_1< \cdots < j_k\le r} e^{-2i(a_{j_1}+\dots+a_{j_k})},
\\
B_k
&=h_k\left(e^{-2ib_1},\ldots,e^{-2ib_n}\right)
=
\sum\nolimits_{0\le j_1\le \cdots \le j_k\le n} e^{-2i(b_{j_1}+\dots+b_{j_k})},
\end{align*}
so that $A_k=0$ for $k>r$. Here, $e_k$ stands for $k$-th elementary symmetric polynomial and $h_k$ is the complete homogeneous symmetric polynomial~\cite[section~3.1.1]{Prasolov}. Hence, we get 
\begin{equation}\label{eq:Cj-defined}
C_j=C_0\sum_{k+l=j}A_kB_l=C_0\sum_{k+l=j}(-1)^ke_k\left(e^{-2ia_1},\ldots,e^{-2ia_r}\right)h_l\left(e^{-2ib_1},\ldots,e^{-2ib_n}\right).
\end{equation}
Similarly, for sufficiently large negative $\Im(z)$ one can check that 
$$
F(z)=\sum_{j=-\infty}^{\varkappa}D_{j}e^{i\delta_{j}z},
$$
where $\delta_{\varkappa}=-\delta_0=\delta_0+2\varkappa=\varkappa$,  $\delta_{j}=\delta_0+2{j}=-\delta_{\varkappa}+2{j}$.
To compute the coefficients we expand using $\sin(z)=-e^{-iz}(1-e^{2iz})/(2i)$:
\begin{multline*}
F(z)=\underbrace{(-2i)^{-\varkappa}e^{i(\sum{b_j}-\sum{a_j})}}_{D_{\varkappa}}e^{i\delta_{\varkappa}z}\frac{\prod_{j=1}^{r}\left(1-e^{2ia_{j}}e^{-2iz}\right)}{\prod_{j=1}^{n}\left(1-e^{2ib_{j}e^{-2iz}}\right)}
\\
=D_{\varkappa}e^{i\delta_{\varkappa}z}\left(\hat{A}_0+\hat{A}_1e^{-2iz}+\hat{A}_2e^{-4iz}+\cdots+\hat{A}_re^{-2irz}\right)
\left(\hat{B}_0+\hat{B}_1e^{-2iz}+\hat{B}_2e^{-4iz}+\cdots\right)
\\
=D_{\varkappa}e^{i\delta_{\varkappa}z}
\left(\hat{A}_0\hat{B}_0+(\hat{A}_1+\hat{B}_1)e^{-2iz}+(\hat{A}_2+\hat{A}_1\hat{B}_1+\hat{B}_2)e^{-4iz}+\cdots
+e^{-2imz}\sum_{k+l=m}\hat{A}_k\hat{B}_l+\cdots\right),
\end{multline*}
where $\hat{A}_0=\hat{B}_0=1$, 
$$
\hat{A}_k=(-1)^ke_k\left(e^{2ia_1},\ldots,e^{2ia_r}\right),~~~
\hat{B}_k=h_k\left(e^{2ib_1},\ldots,e^{2ib_n}\right)
$$
and $\hat{A}_k=0$ for $k>r$. Hence,
\begin{equation}\label{eq:Dj-defined}
D_{\varkappa-j}
=D_{\varkappa}\sum_{k+l=\varkappa-j}\hat{A}_k\hat{B}_l
=D_{\varkappa}\sum_{k+l=\varkappa-j}(-1)^ke_k\left(e^{2ia_1},\ldots,e^{2ia_r}\right)h_l\left(e^{2ib_1},\ldots,e^{2ib_n}\right).
\end{equation}
We take a closer look at coefficients similar to~$C_j$ and~$D_j$ in our Corollary~\ref{cr:sine.over.sine} in subsection~\ref{sec:appl.to.trig} below.
The following result is a part of \cite[Lemma~10]{Braaksma} (after correcting an error in \cite[(11.14)]{Braaksma} and a slight change of notation $D_j\to-D_{j}$):
\begin{theorem}\label{th:Braaksma}
Suppose $k\in\Z$ is arbitrary and the numbers $b_1,\ldots,b_n$ are distinct modulo $\pi$. Then for $F$ defined in \eqref{eq:F-defined} and all complex $z$ except for poles of the functions involved:
\begin{equation}\label{eq:mainBraaksma}
F(z)=\sum_{j=0}^{k-1}C_je^{i\delta_{j}z}+\sum_{j=k}^{\varkappa}D_je^{i\delta_{j}z}+\sum_{j=1}^{n}\frac{\sin(\a-b_j)e^{i(\varkappa+1-2k)(b_j-z)}}{\sin(\b_{[j]}-b_{j})\sin(b_j-z)}.
\end{equation}
The sums with the lower limit greater than the upper limit are assumed to equal zero. The numbers $C_j$, $D_j$ are defined in \eqref{eq:Cj-defined} and \eqref{eq:Dj-defined}, respectively.
\end{theorem}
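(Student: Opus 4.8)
The plan is to reduce \eqref{eq:mainBraaksma} to a single auxiliary relation between the coefficients $C_k,D_k$ and the residues of $F$, and then to recover the stated expansion by summing a geometric series. Write
\begin{equation}\label{eq:Ldef}
L(z)=\sum_{j=1}^{n}\frac{\sin(\a-b_j)\,e^{i(\varkappa+1-2k)(b_j-z)}}{\sin(\b_{[j]}-b_j)\,\sin(b_j-z)}
\end{equation}
for the last sum in \eqref{eq:mainBraaksma}. The engine of the proof is the identity
\begin{equation}\label{eq:CD-res}
C_k-D_k=2i\sum_{j=1}^{n}\frac{\sin(\a-b_j)}{\sin(\b_{[j]}-b_j)}\,e^{i(\varkappa-2k)b_j},\qquad k\in\Z,
\end{equation}
understood with the conventions $C_k=0$ for $k<0$ and $D_k=0$ for $k>\varkappa$ that are forced by the two half-plane expansions of $F$. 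Granting \eqref{eq:CD-res}, everything else is bookkeeping with convergent series.

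To prove \eqref{eq:CD-res} I would integrate $\Phi(z)=F(z)e^{-i\delta_k z}$ around the boundary of a rectangle $[c,c+\pi]\times[-Y,Y]$, with $c$ chosen so that no $b_j$ lies on the vertical sides and $Y$ large enough for the two expansions of $F$ to converge on the horizontal sides. Since $F(z+\pi)=(-1)^{\varkappa}F(z)$ and $e^{-i\delta_k(z+\pi)}=(-1)^{\varkappa}e^{-i\delta_k z}$ (because $\delta_k=2k-\varkappa$), the function $\Phi$ is exactly $\pi$-periodic, so the two vertical contributions cancel. On the top and bottom sides the mean value $\tfrac1\pi\int_c^{c+\pi}\Phi(x\pm iY)\,dx$ selects the zeroth harmonic of $\Phi$ in the variable $e^{2iz}$, which by the expansions $F=\sum_{j\ge0}C_je^{i\delta_j z}$ and $F=\sum_{j\le\varkappa}D_je^{i\delta_j z}$ equals $C_k$ and $D_k$ respectively. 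Evaluating the same contour integral by residues at the simple poles $z\equiv b_j\pmod\pi$, where $\res_{z=b_j}F=-\sin(\a-b_j)/\sin(\b_{[j]}-b_j)$, then yields \eqref{eq:CD-res}.

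With \eqref{eq:CD-res} in hand, I would expand $L$ in the upper half-plane. For $\Im(z)>0$ the geometric series gives $1/\sin(b_j-z)=2i\sum_{p\ge0}e^{-i(2p+1)(b_j-z)}$, so after interchanging the (absolutely convergent) sums and using $\varkappa+1-2k-(2p+1)=-\delta_{k+p}$, the bracketed $j$-sums become exactly the right-hand side of \eqref{eq:CD-res} with $k$ replaced by $k+p$; hence $L(z)=\sum_{j\ge k}(C_j-D_j)e^{i\delta_j z}$. Subtracting this from the upper expansion $F(z)=\sum_{j\ge0}C_je^{i\delta_j z}$ cancels the $C$-terms with $j\ge k$ and leaves precisely $\sum_{j=0}^{k-1}C_je^{i\delta_j z}+\sum_{j=k}^{\varkappa}D_je^{i\delta_j z}$, which is \eqref{eq:mainBraaksma}; the equality then propagates to all admissible $z$ by analytic continuation. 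I expect the main obstacle to be \eqref{eq:CD-res}: one must verify the exact $\pi$-periodicity of $\Phi$ and the pole-free choice of contour, track the sign and the residue $-\sin(\a-b_j)/\sin(\b_{[j]}-b_j)$ carefully, and confirm that the conventions for out-of-range $k$ match the degenerate contour evaluations. The only remaining delicacy is one of convergence domains—both the defining expansion of $F$ and the geometric expansion of $L$ hold only for sufficiently large $\Im(z)$—so the equality is first established there and continued afterwards.
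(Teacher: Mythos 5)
Your proof is correct, but it cannot coincide with the paper's, because the paper contains no proof of Theorem~\ref{th:Braaksma} at all: the statement is quoted from \cite[Lemma~10]{Braaksma} (with a misprint corrected). I checked your steps and they hold: $\Phi(z)=F(z)e^{-i\delta_kz}$ is exactly $\pi$-periodic, since $F(z+\pi)=(-1)^{\varkappa}F(z)$ and $e^{-i\pi\delta_k}=e^{i\pi\varkappa}=(-1)^{\varkappa}$; the period-rectangle integration equates $\pi(D_k-C_k)$ (the horizontal sides, with the out-of-range values $C_k=0$ for $k<0$ and $D_k=0$ for $k>\varkappa$ emerging automatically as absent zeroth harmonics) with $2\pi i$ times the sum of the residues $-\sin(\a-b_j)e^{-i\delta_kb_j}/\sin(\b_{[j]}-b_j)$, which by periodicity of $\Phi$ do not depend on which representatives of the $b_j$ modulo $\pi$ lie in the strip; the geometric resummation of $L$ is legitimate for $\Im z>0$; and subtracting $L$ from $\sum_{j\ge0}C_je^{i\delta_jz}$, with your zero conventions, gives the right-hand side of \eqref{eq:mainBraaksma} for every integer $k$ (including $k<0$ and $k>\varkappa$), after which analytic continuation finishes the argument.

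The comparison with the paper is instructive. Your central identity
\begin{equation*}
C_k-D_k=2i\sum_{j=1}^{n}\frac{\sin(\a-b_j)}{\sin(\b_{[j]}-b_j)}\,e^{i(\varkappa-2k)b_j},\qquad k\in\Z,
\end{equation*}
is precisely Meijer's identity \eqref{eq:MeijerLemma3} in disguise: substitute $k\mapsto\varkappa-k$ there and use $\bar\Omega_m=C_m/C_0$, $\Omega_m=D_{\varkappa-m}/D_{\varkappa}$ together with $e^{i\nu}/C_0=(2i)^{\varkappa}$ and $(-1)^{\varkappa}e^{-i\nu}/D_{\varkappa}=(2i)^{\varkappa}$; dividing by $(2i)^{\varkappa}$ yields exactly the display above. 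So your argument amounts to (i) an independent residue-theorem proof of \cite[Lemma~3]{Meijer}, and (ii) the implication ``Meijer $\Rightarrow$ Braaksma'' obtained by summing the geometric series $1/\sin(b_j-z)=2i\sum_{p\ge0}e^{-i(2p+1)(b_j-z)}$. The paper travels in the opposite direction: it takes Braaksma's expansion for granted and specializes it ($n\mapsto n-1$, $k\mapsto\varkappa+1-k$, $z=b_n$; see the derivation around \eqref{eq:Braaksma.to.Meijer}) to recover \eqref{eq:MeijerLemma3}. Your route therefore reverses the paper's implication and makes its Section~2 self-contained; the only overhead is the routine convergence bookkeeping (polynomial growth of $C_j$, $D_j$ against the geometric decay of $e^{i\delta_jz}$ in the upper half-plane) needed to justify the term-by-term manipulations.
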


Some particular cases are as follows. Recall that $\a=(a_1,\ldots,a_r)$, $\b=(b_1,\ldots,b_n)$ and denote 
\begin{equation}\label{eq:nu-defined}
\nu=\sum\nolimits_{j=1}^{r}{a_j}-\sum\nolimits_{j=1}^{n}{b_j}.    
\end{equation}
If $\varkappa=r-n=-1$ and $k=0$ we simply have  
$$
F(z)=\sum_{j=1}^{n}\frac{\sin(\a-b_j)}{\sin(\b_{[j]}-b_{j})\sin(b_j-z)}.
$$
This formula was earlier discovered by Meijer (see details below in this section) and N{\o}rlund \cite[(3.32)]{Norlund}.
If  $\varkappa=r-n=0$ and $k=0$ we obtain
$$
F(z)=e^{-i\nu}+\sum_{j=1}^{n}\frac{\sin(\a-b_j)e^{i(b_j-z)}}{\sin(\b_{[j]}-b_{j})\sin(b_j-z)}
$$
which was also earlier found by N{\o}rlund \cite[(3.41)]{Norlund}.
If  $\varkappa=r-n=1$ and $k=1$ we have 
\begin{equation}\label{eq:BraaksmaKappa1}
F(z)=\sin(\nu-z)
+\sum_{j=1}^{n}\frac{\sin(\a-b_j)}{\sin(\b_{[j]}-b_{j})\sin(b_j-z)}.
\end{equation}

Note further that for any odd $\varkappa$ and $k=(\varkappa+1)/2$ we can pair the terms in 
$$
\sum_{j=0}^{k-1}C_je^{i\delta_{j}z}+\sum_{j=k}^{\varkappa}D_je^{i\delta_{j}z}
$$
to get a sum of sine functions.  Indeed, $\delta_{\varkappa}=-\delta_{0}=\varkappa$, $\delta_{\varkappa-1}=-\delta_1=\varkappa-2$, $\ldots$,
$\delta_{k}=-\delta_{k-1}=1$ and 
$$
(2i)^{\varkappa}C_0=e^{i\nu}=-\frac{1}{(2i)^{\varkappa}D_{\varkappa}},
$$
(recall that $\varkappa$ is odd).  This yields
$$
C_0e^{i\delta_{0}z}+D_{\varkappa}e^{i\delta_{\varkappa}z}=\frac{1}{(2i)^{\varkappa-1}}\sin\big(\nu+\delta_{0}z\big).
$$
Next,
\begin{equation*}
\begin{split}
(2i)^{\varkappa}C_1=&e^{i\nu}\left(e^{-2i{b_1}}+\dots+e^{-2i{b_n}}-e^{-2i{a_1}}-\cdots-e^{-2i{a_r}}\right), 
\\
(2i)^{\varkappa}D_{\varkappa-1}=&-e^{-i\nu}\left(e^{2i{b_1}}+\dots+e^{2i{b_n}}-e^{2i{a_1}}-\cdots-e^{2i{a_r}}\right).
\end{split}
\end{equation*}
Hence,
$$
C_1e^{i\delta_{1}z}+D_{\varkappa-1}e^{i\delta_{\varkappa-1}z}=\frac{1}{(2i)^{\varkappa-1}}\sum_{j=1}^{n}\sin\big(\nu-2b_j+\delta_{1}z\big)
-\frac{1}{(2i)^{\varkappa-1}}\sum_{j=1}^{r}\sin\big(\nu-2a_j+\delta_{1}z\big).
$$
Further,
\begin{equation*}
\begin{split}
(2i)^{\varkappa}e^{-i\nu}C_2=&\sum_{1\le{j}<l\le{r}}e^{-2i(a_{j}+a_{l})}
-\sum_{j=1}^{r}\sum_{l=1}^{n}e^{-2i(a_{j}+b_{l})}+\sum_{1\le{j}\le{l}\le{n}}e^{-2i(b_{j}+b_{l})}, 
\\
-(2i)^{\varkappa}e^{i\nu}D_{\varkappa-2}=&\sum_{1\le{j}<l\le{r}}e^{2i(a_{j}+a_{l})}
-\sum_{j=1}^{r}\sum_{l=1}^{n}e^{2i(a_{j}+b_{l})}+\sum_{1\le{j}\le{l}\le{n}}e^{2i(b_{j}+b_{l})}.
\end{split}
\end{equation*}
Hence,
\begin{multline*}
C_2e^{i\delta_{2}z}+D_{\varkappa-2}e^{i\delta_{\varkappa-2}z}=\frac{1}{(2i)^{\varkappa-1}}
\bigg\{\sum_{1\le{j}<l\le{r}}\sin\left(\nu-2a_{j}-2a_{l}+\delta_2z\right)
\\
-\sum_{j=1}^{r}\sum_{l=1}^{n}\sin\left(\nu-2a_{j}-2b_{l}+\delta_2z\right)
+\sum_{1\le{j}\le{l}\le{n}}\sin\left(\nu-2b_{j}-2b_{l}+\delta_2z\right)\bigg\}.
\end{multline*}
In general, each monomial in
$$
(2i)^{\varkappa}C_j=e^{i\nu}\sum_{k+l=j}(-1)^ke_k\left(e^{-2ia_1},\ldots,e^{-2ia_r}\right)h_l\left(e^{-2ib_1},\ldots,e^{-2ib_n}\right)
$$
is of the form $e^{i\alpha}$ and has the counterpart of the form $-e^{-i\alpha}$ in 
$$
(2i)^{\varkappa}D_{\varkappa-j}=-e^{-i\nu}\sum_{k+l=j}(-1)^ke_k\left(e^{2ia_1},\ldots,e^{2ia_r}\right)h_l\left(e^{2ib_1},\ldots,e^{2ib_n}\right),
$$
so that 
$$
C_je^{i\delta_{j}z}+D_{\varkappa-j}e^{i\delta_{\varkappa-j}z}
$$
is necessarily a finite sum of sine functions. For instance, for $\varkappa=3$, $k=2$ we get
\begin{multline}\label{eq:Fkappa3}
F(z)=-\frac{1}{4}\sin(\nu-3z)-\frac{1}{4}\sum_{j=1}^{n}\sin(\nu-2b_j-z)
+\frac{1}{4}\sum_{j=1}^{r}\sin(\nu-2a_j-z)
\\
+\sum_{j=1}^{n}\frac{\sin(\a-b_j)}{\sin(\b_{[j]}-b_{j})\sin(b_j-z)}.
\end{multline}

Next, we turn our attention to Meijer's identity. According to \cite[Lemma~3]{Meijer} for any integer $k$ we have 
\begin{equation}\label{eq:MeijerLemma3}
(2{i})^{\varkappa+1}\sum\limits_{t=1}^{n}e^{(-\varkappa+2k){i}b_t}\frac{\sin(\a-b_t)}{\sin(\b_{[t]}-b_t)}
=e^{\nu{i}}\bar{\Omega}_{\varkappa-k}-(-1)^{\varkappa}e^{-\nu{i}}\Omega_k,
\end{equation}
where the numbers $\Omega_m=\Omega_m(\a,\b)$ and $\bar{\Omega}_m=\bar{\Omega}_m(\a,\b)$ are found from the power series expansions:
$$
\frac{\prod_{j=1}^{r}(1-ze^{2{i}a_j})}{\prod_{j=1}^{n}(1-ze^{2{i}b_j})}=\sum\limits_{m}\Omega_mz^m
\quad\text{and}\quad
\frac{\prod_{j=1}^{r}(1-ze^{-2{i}a_j})}{\prod_{j=1}^{n}(1-ze^{-2{i}b_j})}=\sum\limits_{m}\bar{\Omega}_mz^m.
$$
Note that $\Omega_m=\bar{\Omega}_m=0$ for $m<0$. Moreover, comparison with~\eqref{eq:Cj-defined} and~\eqref{eq:Dj-defined} shows that
\begin{equation*}
\Omega_m = \frac{D_{\varkappa-m}}{D_\varkappa}
\quad\text{and}\quad
\bar \Omega_m = \frac{C_m}{C_0}.
\end{equation*}

It is not hard to observe a relation between Braaksma's identity from Theorem~\ref{th:Braaksma} and Mejer's identity~\eqref{eq:MeijerLemma3}.
On shifting~$n\mapsto n-1$  (so that $\varkappa\mapsto\varkappa+1$), changing~$k\mapsto \varkappa+1-k$ and letting~$z=b_n$, Braaksma's identity turns into
\begin{equation} \label{eq:Braaksma.to.Meijer}
    \frac{\sin(\a -b_n)}{\prod_{j=1}^{n-1}\sin(b_n-b_j)}+
    \sum_{j=1}^{n-1}\frac{\sin(\a-b_j)e^{i(\varkappa+2-2\varkappa-2+2k)(b_j-b_n)}}{\prod_{\substack{m=1\\m\ne j}}^{n-1}\sin(b_{m}-b_{j})\cdot\sin(b_n-b_j)}
    =
    \sum_{j=0}^{\varkappa-k}\widetilde C_je^{i\widetilde\delta_{j}b_n}
    +\!\!\sum_{j=\varkappa+1-k}^{\varkappa+1}\!\!\widetilde D_je^{i\widetilde\delta_{j}b_n}
\end{equation}
for~$\widetilde C_j$ and $\widetilde D_j$ equal resp.\ to~$C_j$ and $D_j$ with shifted
parameters, and for~$\widetilde\delta_0=-\varkappa-1$,
$\widetilde\delta_{j}=\widetilde\delta_0+2{j}$. 
At the same time, 
\[
\sum_{l=0}^\infty x^l h_{l}(\gamma_1,\dots,\gamma_n)
=(1-x \gamma_n)^{-1}\prod_{l=1}^{n-1}(1-x \gamma_l)^{-1}
=\sum_{l=0}^\infty x^l\sum_{j=0}^l h_{l-j}\big(\gamma_1,\dots,\gamma_{n-1}\big)\gamma_n^j,
\]
and hence
$$
h_l\left(e^{-2ib_1},\ldots,e^{-2ib_n}\right)%
=\sum_{j=0}^l h_{l-j}\left(e^{-2ib_1},\dots,e^{-2ib_{n-1}}\right)e^{-2i j b_n}.
$$
On plugging this identity along with~$\widetilde C_0^{-1} e^{ib_n}=(2i)^{\varkappa+1}e^{-i(\sum{a_j}-\sum_{j<n}{b_j})}e^{ib_n}=(2i)^{\varkappa+1}e^{-i\nu}$
into~\eqref{eq:Cj-defined}, we obtain
\[
\begin{aligned}
    \bar\Omega_k&=
    \frac{C_k}{C_0}
    =\sum_{l=0}^k (-1)^{k-l}e_{k-l}\left(e^{-2ia_1},\ldots,e^{-2ia_r}\right)
    \sum_{j=0}^l h_{l-j}\left(e^{-2ib_1},\dots,e^{-2ib_{n-1}}\right)e^{-2i j b_n}
    \\
    &=\sum\nolimits_{j=0}^k e^{-2i j b_n}
    \sum\nolimits_{l=j}^k (-1)^{k-l}e_{k-l}\left(e^{-2ia_1},\ldots,e^{-2ia_r}\right)
    h_{l-j}\left(e^{-2ib_1},\dots,e^{-2ib_{n-1}}\right)
    \\
    &=\sum\nolimits_{j=0}^k e^{(-2j) i b_n}{\widetilde C_{k-j}}{\widetilde C_0^{-1}}
    ={\widetilde C_0^{-1}}
    \sum\nolimits_{j=0}^k e^{(\varkappa+1+\widetilde\delta_j-2k) i b_n}\widetilde C_{j}
    \\
    &
    =
    (2i)^{\varkappa+1} e^{-i\nu}
    e^{i (\varkappa-2k) b_n}
    \sum\nolimits_{j=0}^k e^{i \widetilde\delta_j b_n}\widetilde C_{j}
\end{aligned}
\]
and, consequently,
\[
    e^{i\nu}\bar\Omega_{\varkappa-k}
    =
    (2i)^{\varkappa+1}
    e^{i (-\varkappa+2k) b_n}
    \sum\nolimits_{j=0}^{\varkappa-k} e^{i \widetilde\delta_j b_n}\widetilde C_{j}
.
\]
From~\eqref{eq:Dj-defined} we similarly have
\[
    \Omega_k
    =
    \frac{D_{\varkappa-k}}{D_\varkappa}
    =\sum_{j=0}^k e^{(2j) i b_n}\frac{\widetilde D_{\varkappa+1-k+j}}{\widetilde D_{\varkappa+1}}
    =\widetilde D_{\varkappa+1}^{-1}
    \sum\nolimits_{j=\varkappa+1-k}^{\varkappa+1} e^{(2j-2\varkappa-2+2k) i b_n}\widetilde D_{j},
\]
and thus, due to~$\widetilde D_{\varkappa+1}^{-1} e^{-ib_n}=(-2i)^{\varkappa+1}e^{i(\sum{a_j}-\sum_{j<n}{b_j})}e^{ib_n}=(-2i)^{\varkappa+1}e^{i\nu}$,
\[
    -(-1)^{\varkappa}e^{-\nu{i}}\Omega_k=(2i)^{\varkappa+1}
    \!\!\sum_{j=\varkappa+1-k}^{\varkappa+1}\!\! e^{(\widetilde\delta_j+\varkappa+1-2\varkappa-1+2k) i b_n}\widetilde D_{j}
    =(2i)^{\varkappa+1}e^{(-\varkappa+2k) i b_n}
    \!\!\sum_{j=\varkappa+1-k}^{\varkappa+1}\!\! e^{i \widetilde\delta_j b_n}\widetilde D_{j}
    .
\]
The terms on the left-hand side of the
equality~\eqref{eq:Braaksma.to.Meijer} may be combined into one sum, so~\eqref{eq:Braaksma.to.Meijer} times~$(2{i})^{\varkappa+1} e^{(-\varkappa+2k){i}b_n}$ is exactly the same as~\eqref{eq:MeijerLemma3}:
\begin{align*}
    (2{i})^{\varkappa+1}
    \sum\limits_{t=1}^{n}e^{(-\varkappa+2k){i}b_t}
    \frac{\sin(\a-b_t)}{\sin(\b_{[t]}-b_t)}
    &=
    (2{i})^{\varkappa+1}
    e^{(-\varkappa+2k){i}b_n}
    \left(\sum_{j=0}^{k} \widetilde C_j e^{i\widetilde\delta_{j}b_n}
        +\sum_{j=k+1}^{\varkappa+1}\widetilde D_j e^{i\widetilde\delta_{j}b_n}
    \right)
    \\
    &=\left(e^{\nu{i}}\bar{\Omega}_{\varkappa-k}-(-1)^{\varkappa}e^{-\nu{i}}\Omega_k\right)
    .
\end{align*}

In particular, for $\varkappa=k=0$  formula \eqref{eq:MeijerLemma3} yields \eqref{eq:genPtolemy}.
If $\varkappa=1$ ($r=n+1$) and $k=0$ we obtain
$$
\sum\limits_{t=1}^{n}e^{-{i}b_t}
\frac{\sin(\a-b_t)}{\sin(\b_{[t]}-b_t)}=\frac{1}{4}e^{\nu{i}}\left(\sum\nolimits_{j=1}^{n+1}e^{-2{i}a_j}-\sum\nolimits_{j=1}^{n}e^{-2{i}b_j}\right)-\frac{1}{4}e^{-\nu{i}}.
$$
If $\varkappa=2$ ($r=n+2$) and $k=1$, then
\begin{multline*}
(2i)^3\sum\limits_{t=1}^{n}\frac{\sin(\a-b_t)}{\sin(\b_{[t]}-b_t)}=e^{\nu{i}}\left(\sum\nolimits_{j=1}^{n}e^{-2{i}b_j}-\sum\nolimits_{j=1}^{n+2}e^{-2{i}a_j}\right)
\\
-e^{-\nu{i}}\Big(\sum\nolimits_{j=1}^{n}e^{2{i}b_j}-\sum\nolimits_{j=1}^{n+2}e^{2{i}a_j}\Big)
=2i\Big(\sum\nolimits_{j=1}^{n}\sin(\nu-b_j)-\sum\nolimits_{j=1}^{n+2}\sin(\nu-a_j)\Big)
\end{multline*}
or
$$
\sum\limits_{t=1}^{n}\frac{\sin(\a-b_t)}{\sin(\b_{[t]}-b_t)}
=\frac{1}{4}\sum\nolimits_{j=1}^{n+2}\sin(\nu-a_j)-\frac{1}{4}\sum\nolimits_{j=1}^{n}\sin(\nu-b_j).
$$

A corollary of \eqref{eq:MeijerLemma3} stated as \cite[Lemma~4]{Meijer} and, in a slightly different notation, also as \cite[Lemma~5]{Meijer}, asserts that for $r\ge0$, $m\ge1$, $\varkappa\le-1$ and $0\le k\le -\varkappa-1$ we have
\begin{equation}\label{eq:MeijerLemma4}
\sum\limits_{t=1}^{n}e^{(\varkappa+2k+1){i}(z-b_t)}\frac{\sin(\a-b_t)}{\sin(\b_{[t]}-b_t)\sin(z-b_{t})}
=-\frac{\sin(\a-z)}{\sin(\b-z)}
\end{equation}
under the condition that $z-b_t,\;b_j-b_t\notin\Z$ for all $j\ne t$. On substituting~$k\mapsto k-\varkappa-1$, the last formula reduces to a particular case of Theorem~\ref{th:Braaksma}.

In Theorem~\ref{th:Braaksma} and in the resulting identities we require that the numbers $b_1,\ldots,b_n$ be distinct modulo $\pi$.  If there is a subset of equal (or equal modulo $\pi$) values in this set, N{\o}rlund \cite[(3.36) and (3.49)]{Norlund} derived generalizations of some of the above identities, for the case of non-positive $\varkappa$.

\section{Extending the formula of Chu}

\subsection{Partial fraction decomposition}
The main expansion due to Chu \cite[p.231]{Chu} is given by
\begin{equation}\label{eq:ChuAsis}
\frac{P(w)w^{n+1}}{\prod_{k=0}^{n}(w^2-\gamma_k^2)}=c_{n+1}
+\frac{1}{2}\sum\limits_{k=0}^{n}\frac{\gamma_k^n}{\prod_{j\ne{k}}(\gamma_k^2-\gamma_j^2)}\left(\frac{P(\gamma_k)}{w-\gamma_k}+(-1)^n\frac{P(-\gamma_k)}{w+\gamma_k}\right),
\end{equation}
where $P(w)=c_{n+1}w^{n+1}+c_nw^n+\cdots+c_{-n-1}w^{-n-1}$ is a Laurent polynomial and the numbers $\gamma_k$ are distinct. Following Chu \cite[p.231]{Chu} we can write (with $w=e^{iz}$ and $\gamma_k=e^{i\beta_k}$):
$$
R(z)=\frac{P(e^{iz})}{\prod_{k=0}^{n}\sin(z-\beta_k)}=(2i)^{n+1}e^{\mathcal{B}i}\frac{P(w)w^{n+1}}{\prod_{k=0}^{n}(w^2-\gamma_k^2)},
$$
where $\mathcal{B}=\sum_{k=0}^{n}\beta_k$ and this formula holds for $P(w)$ of arbitrary degree.  Combining this with~\eqref{eq:ChuAsis}, Chu obtained an expansion for $R(z)$. Note, however, that formula \eqref{eq:ChuAsis} holds if the degree of the polynomial $P(w)w^{n+1}$ does not exceed $2n+2$ which is the degree of the polynomial in the denominator. Our goal below is to extend Chu's identity to the Laurent polynomial $P(w)$ of arbitrary degree. To this end we begin with the monomial case elaborated in the following lemma. 
\begin{lemma}
    Given two integers~$m,n\ge 0$ take any integer $\vartheta\le m+n$ having the same parity as~$m+n$. Then the following expansion holds:
    \begin{equation}\label{eq:w.basic.expression}
\frac{w^{m+n}}{\prod_{k=0}^n(w^2-\gamma_k^2)}
=
\sum_{j=0}^{(m-n-\vartheta)/2-1}
h_{j}(\gamma_0^2,\dots,\gamma_n^2) w^{m-n-2-2j}+
\sum_{k=0}^n\frac{\gamma_k^{m+n-\vartheta}w^\vartheta}{\prod_{\substack{j=0\\j\ne k}}^{n}(\gamma_k^2-\gamma_j^2)(w^2-\gamma_k^2)},
\end{equation}
   where~$h_j$ denotes the~$j$-th complete homogeneous symmetric polynomial.
\end{lemma}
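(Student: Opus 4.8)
The plan is to prove the identity by partial-fraction decomposition of the rational function on the left in the variable $u=w^2$, treating the factor $w^\vartheta$ (whose parity matches that of $m+n$, so that $m+n-\vartheta$ is even) as a bookkeeping device that keeps track of odd versus even powers. First I would observe that, writing $u=w^2$ and $c_k=\gamma_k^2$, the left-hand side equals $w^{m+n-\vartheta}\cdot w^\vartheta\big/\prod_{k=0}^n(u-c_k)$, where the exponent $m+n-\vartheta$ is a nonnegative even integer, say $2N$ with $N=(m+n-\vartheta)/2$. Thus the genuinely rational part is
\begin{equation*}
\frac{u^{N}}{\prod_{k=0}^n(u-c_k)},
\end{equation*}
a rational function of $u$ whose numerator has degree $N$ and whose denominator has degree $n+1$.

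Next I would split into the two regimes according to whether $N\le n$, i.e.\ whether the fraction is proper. When $N\le n$ (equivalently $m-n-\vartheta\le 0$, so the first sum in \eqref{eq:w.basic.expression} is empty), the fraction is proper and the standard Lagrange/Heaviside partial-fraction formula gives $u^{N}\big/\prod_k(u-c_k)=\sum_{k=0}^n c_k^{N}\big/\big[(u-c_k)\prod_{j\ne k}(c_k-c_j)\big]$; multiplying back by $w^\vartheta$ and substituting $u=w^2$, $c_k=\gamma_k^2$ and $c_k^N=\gamma_k^{m+n-\vartheta}$ reproduces exactly the second sum on the right of \eqref{eq:w.basic.expression}. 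When $N>n$ the fraction is improper and one must first perform polynomial division in $u$: the quotient is a polynomial of degree $N-n-1$ in $u$, and the key claim is that its coefficients are precisely the complete homogeneous symmetric polynomials $h_j(c_0,\dots,c_n)=h_j(\gamma_0^2,\dots,\gamma_n^2)$. This is the only nonroutine point, and I would establish it from the generating-function identity $\prod_{k=0}^n(1-xc_k)^{-1}=\sum_{j\ge0}h_j(c_0,\dots,c_n)x^j$, which is the same tool already deployed in the derivation preceding Theorem~\ref{th:Braaksma}; concretely one checks that
\begin{equation*}
u^{N}=\Bigg(\sum_{j=0}^{N-n-1}h_j(c_0,\dots,c_n)\,u^{N-n-1-j}\Bigg)\prod_{k=0}^n(u-c_k)+\rho(u),
\end{equation*}
with $\deg\rho\le n$, by comparing coefficients via that generating function, and that the remainder $\rho(u)$ has the same Lagrange interpolation values $\rho(c_k)=c_k^{N}$ as before, so its proper partial fraction again yields the second sum.

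The remaining step is purely cosmetic: reverting to $w$ via $u^{N-n-1-j}=w^{2N-2n-2-2j}=w^{m-n-\vartheta-2-2j}$ and multiplying by $w^\vartheta$ turns the polynomial-quotient term into $\sum_{j} h_j(\gamma_0^2,\dots,\gamma_n^2)\,w^{m-n-2-2j}$, matching the first sum in \eqref{eq:w.basic.expression}; the upper index $(m-n-\vartheta)/2-1=N-n-1$ coincides with the degree of the quotient, so the two formulas agree termwise, and in the proper case $N\le n$ this empty sum is consistent with the stated convention. I expect the main obstacle to be verifying the symmetric-function identification of the quotient coefficients cleanly, i.e.\ pinning down the index ranges so that the finite truncation of $\prod(1-xc_k)^{-1}$ produces exactly $h_0,\dots,h_{N-n-1}$ and no spurious terms; once that generating-function bookkeeping is set up, everything else is a direct substitution. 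A uniform alternative that avoids the case split is to compute the residue of the left-hand side at each $w=\pm\gamma_k$ and at $w=\infty$ and invoke the residue theorem, but I would prefer the partial-fraction-in-$u$ route since it makes the appearance of $h_j$ transparent.
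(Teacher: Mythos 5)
Your proof is correct, and every step you flag as needing care (the identification of the quotient coefficients with $h_j$, the index bookkeeping, the values $\rho(\gamma_k^2)=\gamma_k^{m+n-\vartheta}$ of the remainder) does go through exactly as you sketch it. Your route is genuinely more algebraic than the paper's, though both hinge on the same key tool. The paper never changes variables or divides polynomials: it expands the left-hand side at infinity via the generating function $\prod_k(1-\gamma_k^2/w^2)^{-1}=\sum_{j\ge0}h_j(\gamma_0^2,\dots,\gamma_n^2)w^{-2j}$, observes that subtracting the pole sum $\sum_k \gamma_k^{m+n-\vartheta}\big/\big[\prod_{j\ne k}(\gamma_k^2-\gamma_j^2)(w^2-\gamma_k^2)\big]$ leaves a function analytic in the whole plane, and then invokes Liouville's theorem to conclude that this entire function must equal the polynomial part of the expansion at infinity; this handles your two cases ($N\le n$ and $N>n$) uniformly and disposes of the remainder analysis in one stroke. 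You instead pass to $u=w^2$, perform explicit polynomial division with the quotient identified through the same generating function, and apply the Lagrange/Heaviside formula to the proper remainder. What the paper's argument buys is brevity and no case split; what yours buys is that it is purely algebraic (no appeal to analyticity or growth estimates, so it works verbatim over any field where the $\gamma_k^2$ are distinct) and it makes transparent \emph{why} the $h_j$ appear, namely as the coefficients of the division quotient. The uniform residue-at-infinity alternative you mention at the end is in fact essentially what the paper does, packaged as Liouville's theorem rather than as the residue theorem.
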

\begin{proof}
On the one hand, given an integer~$m\ge 0$ the formula
\[
\frac{w^{m+n}}{\prod_{k=0}^n(w^2-\gamma_k^2)}
=
\frac{w^{m-n-2}}{\prod_{k=0}^n\big(1-\gamma_k^2/w^2\big)}
=
w^{m-n-2}
\sum_{j=0}^{\infty}
h_j(\gamma_0^2,\dots,\gamma_n^2){w^{-2j}}
\]
holds whenever~$|w|$ is sufficiently large. On the other hand, if~$m+n-\vartheta$ is a nonnegative even number, the difference
\[
\frac{w^{m+n-\vartheta}}{\prod_{k=0}^n(w^2-\gamma_k^2)}
-
\sum_{k=0}^n\frac{\gamma_k^{m+n-\vartheta}}{\prod_{\substack{j=0\\j\ne k}}^{n}(\gamma_k^2-\gamma_j^2)(w^2-\gamma_k^2)}
\]
is analytic in the whole complex plane and hence,  Liouville's theorem implies that
\[
\frac{w^{m+n-\vartheta}}{\prod_{k=0}^n(w^2-\gamma_k^2)}
=
\sum_{j=0}^{(m-n-\vartheta)/2-1}
h_{j}(\gamma_0^2,\dots,\gamma_n^2) w^{(m-n-\vartheta)-2-2j}+
\sum_{k=0}^n\frac{\gamma_k^{m+n-\vartheta}}{\prod_{\substack{j=0\\j\ne k}}^{n}(\gamma_k^2-\gamma_j^2)(w^2-\gamma_k^2)}
\]
assuming that the first sum on the right-hand side is zero if~$m-n-\vartheta<2$. The last
equality multiplied by~$w^{\vartheta}$ yields the desired formula.
\end{proof}

In applications of this lemma we will frequently take ~$\vartheta\in\{-1,0,1\}$ of the same
parity as~$m+n$, which corresponds to~$\vartheta=(m+n)-2\big\lfloor\frac{m+n}2\big\rfloor$
or~$\vartheta=2\big\lfloor\frac{m+n}2\big\rfloor-(m+n)$. 
Then, if~$\vartheta\in\{0,1\}$ we have
\[
2\frac{\gamma_k^{1-\vartheta}w^\vartheta}{w^2-\gamma_k^2}
=\frac{w+\gamma_k +
(-1)^{1-\vartheta}({w-\gamma_k})}{w^2-\gamma_k^2}=
\frac{1}{w-\gamma_k}+
\frac{(-1)^{1-\vartheta}}{w+\gamma_k}
\quad\text{and}\quad
(-1)^{1-\vartheta}=(-1)^{m+n-1}
,
\]
and for~$\vartheta\in\{0,-1\}$ we have
\[
2\frac{\gamma_k^{-\vartheta}w^{1+\vartheta}}{w^2-\gamma_k^2}=
\frac{w+\gamma_k +
(-1)^{-\vartheta}({w-\gamma_k})}{w^2-\gamma_k^2}=
\frac{1}{w-\gamma_k}+
\frac{(-1)^{-\vartheta}}{w+\gamma_k}
\quad\text{and}\quad
(-1)^{-\vartheta}=(-1)^{m+n}
.
\]

Multiplying  formula~\eqref{eq:w.basic.expression} by $w$ and choosing the first and the second of the above decompositions, respectively, we arrive at:
\begin{align}\nonumber
\frac{w^{m+n+1}}{\prod_{k=0}^n(w^2-\gamma_k^2)}
=
\sum_{j=0}^{(m-n-|\vartheta|)/2-1}&
h_{j}(\gamma_0^2,\dots,\gamma_n^2) w^{m-n-1-2j}
\\\label{eq:w.basic.sum-}
&+\sum_{k=0}^{n}\frac{w}{2\prod_{\substack{j=0\\j\ne k}}^{n}(\gamma_k^2-\gamma_j^2)}\Big(\frac{\gamma_k^{m+n-1}}{w-\gamma_k}
+
\frac{(-\gamma_k)^{m+n-1}}{w+\gamma_k}\Big)
,
\\\nonumber
\frac{w^{m+n+1}}{\prod_{k=0}^n(w^2-\gamma_k^2)}
=
\!\!\!\!\!\sum_{j=0}^{(m-n+|\vartheta|)/2-1}&
h_{j}(\gamma_0^2,\dots,\gamma_n^2) w^{m-n-1-2j}
\\\label{eq:w.basic.sum+}
&+\sum_{k=0}^n\frac{1}{2\prod_{\substack{j=0\\j\ne k}}^{n}(\gamma_k^2-\gamma_j^2)}\Big(\frac{\gamma_k^{m+n}}{w-\gamma_k}
+
\frac{(-\gamma_k)^{m+n}}{w+\gamma_k}\Big).
\end{align}
After substituting~$w\mapsto w^{-1}$ and~$\gamma_k\mapsto \gamma_k^{-1}$, the expressions~\eqref{eq:w.basic.sum-} and~\eqref{eq:w.basic.sum+} turn respectively into
\begin{align}
\frac{w^{-m+n+1}}{\prod_{k=0}^n(w^2-\gamma_k^2)}
&=\sum_{j=0}^{(m-n-|\vartheta|)/2-1}
\frac{h_{j}(\gamma_0^{-2},\dots,\gamma_n^{-2})}{\prod_{k=0}^n(-\gamma_k^2)} w^{-(m-n-1-2j)}\nonumber
\\\label{eq:w.basic.neg.sum-}
&+\sum_{k=0}^n\frac{1}{2\prod_{\substack{j=0\\j\ne k}}^{n}(\gamma_k^2-\gamma_j^2)}
\Big(\frac{\gamma_k^{-m+n}}{w-\gamma_k}+\frac{(-\gamma_k)^{-m+n}}{w+\gamma_k}\Big),
\end{align}
and
\begin{align}
\frac{w^{-m+n+1}}{\prod_{k=0}^n(w^2-\gamma_k^2)}
&=\!\!\!\!\!\!
\sum_{j=0}^{(m-n+|\vartheta|)/2-1}\!
\frac{h_{j}(\gamma_0^{-2},\dots,\gamma_n^{-2})}{\prod_{k=0}^n(-\gamma_k^2)} w^{-(m-n-1-2j)}\nonumber
\\\label{eq:w.basic.neg.sum+}
&+\sum_{k=0}^n \frac{w}{2\prod_{\substack{j=0\\j\ne k}}^{n}(\gamma_k^2-\gamma_j^2)}
\Big(\frac{\gamma_k^{-m+n-1}}{w-\gamma_k}+\frac{(-\gamma_k)^{-m+n-1}}{w+\gamma_k}\Big).
\end{align}

If now we put 
$$
P_{n+1}(w)=\sum_{m=-n-1}^{n+1} c_m w^{m},
$$
then by taking a linear combination with the coefficients~$c_m$ of~\eqref{eq:w.basic.sum+} and~\eqref{eq:w.basic.neg.sum-} written for~$m=0,\ldots,n+1$, we arrive at Chu's identity~\eqref{eq:ChuAsis}.
In a similar fashion, for $P_{n+2}(w)$ containing degrees from $w^{-n-2}$ to $w^{n+2}$ formulas~\eqref{eq:w.basic.sum+} and~\eqref{eq:w.basic.neg.sum-} applied with~$m=0,\dots,n+2$ imply
\begin{equation}\label{eq:Chuplus1}
\frac{P_{n+2}(w)w^{n+1}}{\prod_{k=0}^{n}(w^2-\gamma_k^2)}=c_{n+2}w+c_{n+1}
-\frac{(-1)^nc_{-n-2}}{w\prod_{k=0}^{n}\gamma_k^2}
+\frac{1}{2}\sum\limits_{k=0}^{n}\frac{\gamma_k^n}{\prod_{j\ne{k}}(\gamma_k^2-\gamma_j^2)}\left(\frac{P(\gamma_k)}{w-\gamma_k}+(-1)^n\frac{P(-\gamma_k)}{w+\gamma_k}\right).
\end{equation}
The sum of residues of a rational function at all finite points is well-known to be equal to its residue at infinity \cite[(4.1.14)]{AF}, which is the coefficient at $w^{-1}$ in the asymptotic expansion ($w\to\infty$) 
\begin{equation}\label{eq:Fasymp}
\frac{P_{n+2}(w)w^{n+1}}{\prod_{k=0}^{n}(w^2-\gamma_k^2)}=\frac{c_{n+2}w^{2n+4}+c_{n+1}w^{2n+3}+\cdots}{w^{2n+3}+\alpha w^{2n+1}+\cdots}
=c_{n+2}w+c_{n+1}+\frac{c_n-\alpha{c_{n+2}}}w+O\Big(\frac{1}{w^2}\Big).
\end{equation}
Here $\alpha=-\sum_{k=}^{n}\gamma_k^2$ in view of 
$$
\prod_{k=0}^{n}(w^2-\gamma_k^2)=w^{2n+2}-w^{2n}\sum_{k=0}^{n}\gamma_k^2+O\big(w^{2n-2}\big)~~\text{as}~w\to\infty.
$$
Equating the coefficient at $w^{-1}$ to the sum of residues at all  finite points leads to the identity 
\begin{equation}\label{eq:sumres1}
c_{n+2}\sum_{k=0}^{n}\gamma_k^2+c_{n}=-\frac{(-1)^nc_{-n-2}}{\prod_{k=0}^{n}\gamma_k^2}+\frac{1}{2}\sum_{k=0}^{n}\frac{\gamma_k^n(P(\gamma_k)+(-1)^nP(-\gamma_k))}{\prod_{j\ne{k}}(\gamma_k^2-\gamma_j^2)}.
\end{equation}

This straightforward approach may be continued. For $P_{n+3}(w)$ by application of \eqref{eq:w.basic.sum+} and~\eqref{eq:w.basic.neg.sum-} for~$m=0,\dots, n+3$ one arrives at the expansion 
\begin{multline}\label{eq:Chuplus2}
 \frac{P_{n+3}(w)w^{n+1}}{\prod_{k=0}^{n}(w^2-\gamma_k^2)}=c_{n+3}w^2+c_{n+2}w+c_{n+1}+c_{n+3}\sum_{k=0}^{n}\gamma_k^2
\\
+\frac{(-1)^{n+1}c_{-n-2}}{w\prod_{k=0}^{n}\gamma_k^2}+\frac{(-1)^{n+1}c_{-n-3}}{w^2\prod_{k=0}^{n}\gamma_k^2}
+\frac{1}{2}\sum\limits_{k=0}^{n}\frac{\gamma_k^n}{\prod_{j\ne{k}}(\gamma_k^2-\gamma_j^2)}\left(\frac{P(\gamma_k)}{w-\gamma_k}+(-1)^n\frac{P(-\gamma_k)}{w+\gamma_k}\right).
\end{multline}
Equating the coefficient at $w^{-1}$ in the  asymptotic approximation 
$$
\frac{P_{n+3}(w)w^{n+1}}{\prod_{k=0}^{n}(w^2-\gamma_k^2)}=c_{n+3}w^2+c_{n+2}w+c_{n+1}+c_{n+3}\sum_{k=0}^{n}\gamma_k^2+\Big(c_{n}+c_{n+2}\sum\nolimits_{k=0}^{n}\gamma_k^2\Big)\frac{1}{w}+O\Big(\frac{1}{w^2}\Big)
$$
as $w\to\infty$, to the sum of residues at all finite points, we again obtain identity \eqref{eq:sumres1}.

It is not hard to write expressions for Laurent polynomials of higher degrees in the numerator, although they become increasingly cumbersome, e.g.
\begin{equation}\label{eq:Chuplus3}
\begin{aligned}
\frac{w^{n+1}P_{n+4}(w)}{\prod_{k=0}^n(w^2-\gamma_k^2)}
={}&
c_{n+4}w^3+
c_{n+3}w^2+
\Big(c_{n+2}+c_{n+4}\sum_{k=0}^n\gamma_k^2\Big)w+\Big(c_{n+1}+c_{n+3}\sum_{k=0}^n\gamma_k^2\Big)
\\
+
\Big(c_{-n-2}+&c_{-n-4}\sum_{j=0}^n\gamma_j^{-2}\Big)\frac{1}{w\prod_{k=0}^n(-\gamma_k^{2})}
+\frac{c_{-n-3}w^{-2}}{\prod_{k=0}^n(-\gamma_k^{2})}+\frac{c_{-n-4}w^{-3}}{\prod_{k=0}^n(-\gamma_k^{2})}
\\
&+\frac{1}{2}\sum\limits_{k=0}^{n}\frac{\gamma_k^n}{\prod_{j\ne{k}}(\gamma_k^2-\gamma_j^2)}\left(\frac{P_{n+4}(\gamma_k)}{w-\gamma_k}+(-1)^n\frac{P_{n+4}(-\gamma_k)}{w+\gamma_k}\right).
\end{aligned}    
\end{equation}
It is straightforward to verify that the asymptotics of the function on the left hand side as $w\to\infty$ has the form
$$
\frac{w^{n+1}P_{n+4}(w)}{\prod_{k=0}^n(w^2-\gamma_k^2)}=p_3(w)+\Bigg(c_n+c_{n+2}\sum_{k=0}^{n}\gamma_k^2+c_{n+4}\Big(\sum_{k=0}^{n}\gamma_k^2\Big)^2-c_{n+4}\!\!\!\!\sum_{0\le{j}<k\le{n}}(\gamma_j\gamma_k)^2\Bigg)w^{-1}+O(w^{-2}),
$$
where $p_3(w)$ is some third degree polynomial.  Equating the coefficient at $w^{-1}$ to the sum of residues at all finite points we obtain the identity:
\begin{multline}\label{eq:sumres3}
c_n+c_{n+2}\sum_{k=0}^{n}\gamma_k^2+c_{n+4}\Big(\sum_{k=0}^{n}\gamma_k^4+\!\!\!\!\sum_{0\le{j}<k\le{n}}(\gamma_j\gamma_k)^2\Big)
\\
=\Big(c_{-n-2}+c_{-n-4}\sum_{j=0}^n\gamma_j^{-2}\Big)\frac{(-1)^{n+1}}{\prod_{k=0}^n\gamma_k^{2}}
+\frac{1}{2}\sum_{k=0}^{n}\frac{\gamma_k^n(P_{n+4}(\gamma_k)+(-1)^nP_{n+4}(-\gamma_k))}{\prod_{j\ne{k}}(\gamma_k^2-\gamma_j^2)}
\end{multline}
valid for any Laurent polynomial $P_{n+4}(w)$ with terms $w^k$, $k=-n-4,\ldots,n+4$.
\bigskip

\subsection{Application to trigonometric identities}\label{sec:appl.to.trig}
The trigonometric form of formula \eqref{eq:w.basic.expression} is presented in the following corollary (note that the indexing of vectors $\a$ and $\b$ in this section starts with zero in agreement with \cite{Chu}, so that $\sin(z-\b)=\prod_{j=0}^{n}\sin(z-b_j)$).
\begin{corollary}
Define $\mathcal B=\sum_{j=0}^nb_j$ and 
\begin{equation}\label{eq:Ej-defined}
E_{j}^\pm=(\pm 2i)^{n+1}h_{j}(e^{\pm 2ib_0},\dots,e^{\pm 2ib_n})e^{\pm i\mathcal B}
=(\pm 2i)^{n+1}\!\!\sum_{0\le k_1\le \cdots \le k_j\le n}\!\! e^{\pm2i(b_{k_1}+\dots+b_{k_j})\pm i\mathcal B}.
\end{equation}
For each integer $m\ge0$ and integer $\vartheta\le m+n$ having the same parity as $m+n$ we have
\begin{align}\label{eq:exp.positive.powers}
        \frac{e^{imz}}{\sin(z-\b)}
        &=
        \sum\nolimits_{j=0}^{(m-n-\vartheta)/2-1}
        E_{j}^+ e^{i(m-n-2j-1)z}
        +
        \sum_{k=0}^n\frac{e^{i(m-\vartheta)b_k}e^{i\vartheta z}}
        {\sin(b_k-\b_{[k]})\sin(z-b_k)},
    \\
    \label{eq:exp.negative.powers}
        \frac{e^{-imz}}{\sin(z-\b)}
        &=
        \sum\nolimits_{j=0}^{(m-n-\vartheta)/2-1}
        E_{j}^- e^{i(n+2j+1-m)z}
        +
        \sum_{k=0}^n\frac{e^{-i(m-\vartheta)b_k}e^{-i\vartheta z}}
        {\sin(b_k-\b_{[k]})\sin(z-b_k)}.
    \end{align}
\end{corollary}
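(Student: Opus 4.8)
The statement is the trigonometric incarnation of formula~\eqref{eq:w.basic.expression}, so the plan is to substitute $w=e^{iz}$ and $\gamma_k=e^{ib_k}$ into the Lemma and rewrite every factor in terms of sines. The two computational facts that drive everything are the elementary factorizations $e^{2iz}-e^{2ib_k}=2i\,e^{i(z+b_k)}\sin(z-b_k)$ and $e^{2ib_k}-e^{2ib_j}=2i\,e^{i(b_k+b_j)}\sin(b_k-b_j)$. From the first I obtain $\prod_{k=0}^{n}(w^2-\gamma_k^2)=(2i)^{n+1}e^{i(n+1)z}e^{i\mathcal{B}}\sin(z-\b)$, and from both I obtain $\prod_{j\ne k}(\gamma_k^2-\gamma_j^2)=(2i)^{n}e^{i((n-1)b_k+\mathcal{B})}\sin(b_k-\b_{[k]})$, where I use that the product $\prod_{j\ne k}$ runs over $n$ indices and that $\sum_{j\ne k}(b_k+b_j)=(n-1)b_k+\mathcal{B}$.

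To prove~\eqref{eq:exp.positive.powers} I multiply the identity~\eqref{eq:w.basic.expression} through by the factor $(2i)^{n+1}e^{i\mathcal{B}}e^{iz}$. On the left this turns $w^{m+n}/\prod_{k}(w^2-\gamma_k^2)$ into $e^{imz}/\sin(z-\b)$, since the prefactors $e^{i(n+1)z}e^{i\mathcal B}$ from the denominator cancel and the residual power of $e^{iz}$ is $e^{i(m+n-(n+1)+1)z}=e^{imz}$. In the first sum the monomial $w^{m-n-2-2j}=e^{i(m-n-2-2j)z}$ picks up one more factor $e^{iz}$ and, together with $(2i)^{n+1}h_j(e^{2ib_0},\dots,e^{2ib_n})e^{i\mathcal B}=E_j^{+}$, produces exactly $E_j^{+}e^{i(m-n-1-2j)z}$. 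In the $k$-th term of the second sum I substitute the two product formulas above: the constants collapse to $(2i)^{n+1}$ in the denominator (cancelling the overall $(2i)^{n+1}$), the exponentials $e^{i((n-1)b_k+\mathcal B)}$ and $e^{i(z+b_k)}$ combine with the numerator $e^{i(m+n-\vartheta)b_k}e^{i\vartheta z}$ and with the multiplier $e^{i\mathcal B}e^{iz}$ so that $\mathcal B$ cancels, the $b_k$-exponent reduces to $m-\vartheta$, and the $z$-exponent reduces to $\vartheta$. This is precisely the summand in~\eqref{eq:exp.positive.powers}, and the upper summation limit $(m-n-\vartheta)/2-1$ is inherited unchanged from the Lemma.

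For~\eqref{eq:exp.negative.powers} I do not repeat the computation but instead apply the reflection $z\mapsto -z$, $b_k\mapsto -b_k$ to the already established~\eqref{eq:exp.positive.powers} (equivalently, one may apply the Lemma after $w\mapsto w^{-1}$, $\gamma_k\mapsto\gamma_k^{-1}$). Under this reflection $\sin(z-\b)\mapsto(-1)^{n+1}\sin(z-\b)$, $E_j^{+}\mapsto(-1)^{n+1}E_j^{-}$ (because $(2i)^{n+1}=(-1)^{n+1}(-2i)^{n+1}$), while in each pole term $\sin(b_k-\b_{[k]})$ contributes $(-1)^{n}$ and $\sin(z-b_k)$ contributes $(-1)$. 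After multiplying the reflected identity by $(-1)^{n+1}$ every sign collects into $(-1)^{2(n+1)}=1$, and the exponentials turn $m-n-1-2j$ into $n+2j+1-m$ and flip the signs of the $b_k$- and $z$-exponents, giving exactly~\eqref{eq:exp.negative.powers}.

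The only genuine obstacle is bookkeeping: keeping the powers of $2i$, the exponential prefactors in $\mathcal B$, $b_k$ and $z$, and above all the parity signs aligned in the reflection step, where a single miscounted factor of $-1$ would spoil the result. Since~\eqref{eq:w.basic.expression} is an identity of rational functions, the substitution $w=e^{iz}$ is legitimate wherever the quantities are finite, so the resulting trigonometric identities hold for all complex $z$ outside the poles, which is exactly the asserted range.
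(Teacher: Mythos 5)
Your proof is correct and follows essentially the same route as the paper: substitute $w=e^{iz}$, $\gamma_k=e^{ib_k}$ into the Lemma, factor $e^{2iz}-e^{2ib_k}=2i\,e^{i(z+b_k)}\sin(z-b_k)$, and track the exponential prefactors; your reflection $z\mapsto-z$, $b_k\mapsto-b_k$ for the second identity is just a repackaging of the paper's substitution $w=e^{-iz}$, $\gamma_k=e^{-ib_k}$, as you yourself note, and your sign bookkeeping there checks out.
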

\begin{proof}
Substituting $w=e^{iz}$ and $\gamma_k=e^{ib_k}$ into  formula \eqref{eq:w.basic.expression} turns it into
\begin{multline*}
    \frac{e^{i(m+n)z}}{\prod_{k=0}^n(e^{2iz}-e^{2ib_k})}
    =
    \sum\nolimits_{j=0}^{(m-n-\vartheta)/2-1}
    h_{j}(e^{2ib_0},\dots,e^{2ib_n}) e^{i(m-n-2-2j)z}
    \\
    +
    \sum_{k=0}^n\frac{e^{i(m+n-\vartheta)b_k} e^{i\vartheta z}}
    {\prod_{\substack{j=0\\j\ne k}}^{n}(e^{2ib_k}-e^{2ib_j})(e^{2iz}-e^{2ib_k})},
\end{multline*}
whence due to~$e^{2iz}-e^{2ib_k}=2i e^{iz+ib_k}\sin(z-b_k)$ we arrive at
\[
    \frac{e^{i(m-1)z-i\mathcal B}}{\sin(z-\b)}
    =
    e^{-i\mathcal B}\sum\nolimits_{j=0}^{(m-n-\vartheta)/2-1}
    E_{j}^+ e^{i(m-n-2-2j)z}
    +
    \sum_{k=0}^n\frac{e^{i(m-\vartheta)b_k+i(\vartheta-1)z-i\mathcal B}}{\sin(b_k-\b_{[k]})\sin(z-b_k)}.
\]
This yields \eqref{eq:exp.positive.powers}. 
Analogously, formula \eqref{eq:exp.negative.powers} follows from~\eqref{eq:w.basic.expression} after substitution~$w=e^{-iz}$ and~$\gamma_k=e^{-ib_k}$.
\end{proof}

Our first application of the above corollary leads immediately to a set of generalizations of \cite[eq. (10)]{Chu}. For every integer~$\vartheta$ satisfying~$m-n\le \vartheta\le m+n$ and of the same parity as~$m+n$, that is for
\[
\vartheta\in\{m-n, m-n+2,\dots, m+n\},
\]
the sums containing~$E_j^\pm$ vanish on the right-hand sides of~\eqref{eq:exp.positive.powers} and~\eqref{eq:exp.negative.powers} leading to 
\begin{equation}\label{eq:ex0.trig.poly}
  \frac{\sin(mz-a)}{\sin(z-\b)}=
    \sum_{k=0}^n\frac{\sin(mb_k-a + \vartheta(z -b_k))}
    {\sin(b_k-\b_{[k]})\sin(z-b_k)}.
\end{equation}
If furthermore,  $m\le n$ choosing $\vartheta\in\{m-n, m-n+2,\dots, n-m\}$, we will also have ~$m-n\le -\vartheta\le n-m$, so we can use~$-\vartheta$ instead of~$\vartheta$ in one of the
formulas~\eqref{eq:exp.positive.powers}--\eqref{eq:exp.negative.powers}, whence (cf.~\cite[eq.~(12)--(13)]{Chu})
\[
    \frac{\sin(mz-a)}{\sin(z-\b)}=\sum_{k=0}^n\frac{\sin(mb_k-a) e^{\pm i \vartheta (z -b_k)}}
    {\sin(b_k-\b_{[k]})\sin(z-b_k)}=\sum_{k=0}^n\frac{\sin(mb_k-a) \cos\big(\vartheta (z-b_k)\big)}
  {\sin(b_k-\b_{[k]})\sin(z-b_k)}
\]
with the second equality obtained by taking the arithmetic average of the ``$+$'' and ``$-$'' cases of the first equality.

Recall that~$E_j^\pm =(\pm 2i)^{n+1}\sum_{0\le k_1\le \cdots \le k_j\le n}
e^{\pm 2i(b_{k_1}+\dots+b_{k_j})\pm i\mathcal B}$, so that for integer~$m>n$ we have
\begin{align*}
    G_{j,m}(z)
    &\overset{\textup{def}}{{}={}}
    (2i)^{-1} \left(E_j^+ e^{-ia}e^{i(m-n-2j-1)z}
    -
    E_j^- e^{ia}e^{-i(m-n-2j-1)z}\right)
    \\
    &=
    (2i)^{n} \!\!\!\!\!\!\sum_{0\le k_1\le \cdots \le k_j\le n}\!\!\!
    \left(
    e^{i(2\sum_{l=1}^jb_{k_l}+\mathcal B-a + (m-n-2j-1)z)}
    +
    (-1)^{n} e^{-i(2\sum_{l=1}^jb_{k_l}+\mathcal B-a + (m-n-2j-1)z)}\right)
    \\
    &= \begin{cases}
        \displaystyle
        (-4)^{n/2}\;2\sum_{\!\!0\le k_1\le \cdots \le k_j\le n\!\!}
        \cos\Big[2\sum_{l=1}^jb_{k_l}+\mathcal B-a + (m-n-2j-1)z\Big],&
        \text{if $n$ is even;}\\[5pt]
        \displaystyle
        (-4)^{\frac{n+1}2}\sum_{\!\!0\le k_1\le \cdots \le k_j\le n\!\!}
        \sin\Big[2\sum_{l=1}^jb_{k_l}+\mathcal B-a + (m-n-2j-1)z\Big],&
        \text{if $n$ is odd.}\\
    \end{cases}
\end{align*}
In particular,
\begin{equation*}
    G_{0,m}(z)
    = \begin{cases}
        \displaystyle
        (-4)^{n/2}\;2
        \cos\big[\mathcal B-a + (m-n-1)z\big],&
        \text{if $n$ is even;}\\[5pt]
        \displaystyle
        (-4)^{\frac{n+1}2}
        \sin\big[\mathcal B-a + (m-n-1)z\big],&
        \text{if $n$ is odd.}\\
    \end{cases}
\end{equation*}
Applying \eqref{eq:exp.positive.powers} and~\eqref{eq:exp.negative.powers} for any integer~$\vartheta\le m+n$ that has the same parity as~$m+n$ we obtain a generalization of~\eqref{eq:ex0.trig.poly} in the form 
\begin{align}
\frac{\sin(mz-a)}{\sin(z-\b)}
{}={}&
\sum\nolimits_{j=0}^{(m-n-\vartheta)/2-1}
E_j^+ e^{-ia}e^{i(m-n-2j-1)z}
-
\sum\nolimits_{j=0}^{(m-n-\vartheta)/2-1}
E_j^- e^{ia}e^{-i(m-n-2j-1)z}
\nonumber
\\&+
\sum_{k=0}^n\frac{e^{i(mb_k-a)+i\vartheta(z-b_k)}-e^{-i(mb_k-a)-i\vartheta(z-b_k)}}
{\sin(b_k-\b_{[k]})\sin(z-b_k)}
\nonumber
  \\[2pt]
\label{eq:ex1.trig.poly}  
{}={}&
\sum\nolimits_{j=0}^{(m-n-\vartheta)/2-1} G_{j,m}(z)
+
\sum_{k=0}^n\frac{\sin\big(mb_k-a+\vartheta(z-b_k)\big)}
       {\sin(b_k-\b_{[k]})\sin(z-b_k)}.
\end{align}

For instance, the last formula in the case~$m= n+1$ with~$\vartheta=1$ and~$\vartheta=-1$, respectively, yields
\begin{align*}
\frac{\sin\big((n+1)z-a\big)}{\sin(z-\b)}
&=
\sum_{k=0}^n\frac{\sin\big(nb_k-a+z\big)}
{\sin(b_k-\b_{[k]})\sin(z-b_k)}
\\
&=
\sum_{k=0}^n\frac{\sin\big((n+2)b_k-a-z\big)}
{\sin(b_k-\b_{[k]})\sin(z-b_k)}
+\begin{cases}
    (-4)^{n/2}\cdot 2\cos(\mathcal B-a),&\text{if $n$ is even;}\\
    (-4)^{(n+1)/2}\cdot\sin(\mathcal B-a),&\text{if $n$ is odd.}
\end{cases}
\end{align*}
Moreover, combinations of~\eqref{eq:exp.negative.powers} and~\eqref{eq:exp.positive.powers} with
choices of~$\vartheta\in\{-1,1\}$ other than in~\eqref{eq:ex1.trig.poly} can also be of interest; for~$m=n+1$ they additionally give
\begin{align*}
\frac{\sin\big((n+1)z-a\big)}{\sin(z-\b)}
&=
(-2i)^{n}e^{ia-i\mathcal B}
+
\sum_{k=0}^n\frac{\sin\big((n+1)b_k-a\big) e^{i(z-b_k)}}
                                                                    {\sin(b_k-\b_{[k]})\sin(z-b_k)}
  \\
&=
(2i)^{n}e^{i\mathcal B-ia}
+
\sum_{k=0}^n\frac{\sin\big((n+1)b_k-a\big) e^{i(b_k-z)}}
{\sin(b_k-\b_{[k]})\sin(z-b_k)}
\\
&\hspace{-5em}=
\sum_{k=0}^n\frac{\sin\big((n+1)b_k-a\big) \cot(z-b_k)}
{\sin(b_k-\b_{[k]})}
+
\begin{cases}
(-4)^{n/2}\cdot\cos(\mathcal B-a),&\text{if $n$ is  even;}\\
-(-4)^{\frac{n-1}2}\cdot 2\sin(\mathcal B-a),&\text{if $n$ is odd.}
\end{cases}
\end{align*}

Other curious particular cases of~\eqref{eq:ex1.trig.poly} include
\begin{align*}
\frac{\sin\big((n+2)z-a\big)}{\sin(z-\b)}
&=
\sum_{k=0}^n\frac{\sin\big((n+2)b_k-a\big)}
{\sin(b_k-\b_{[k]})\sin(z-b_k)}
+\begin{cases}
(-4)^{\frac{n}2}\cdot 2\cos(z+\mathcal B-a),&\text{if $n$ is even;}\\
(-4)^{\frac{n-1}2}\cdot\sin(z+\mathcal B-a),&\text{if $n$ is odd,}
\end{cases}
\end{align*}
and
\begin{align*}
\frac{\sin\big((n+3)z-a\big)}{\sin(z-\b)}
&=
\sum_{k=0}^n\frac{\sin\big((n+2)b_k-a+z\big)}
{\sin(b_k-\b_{[k]})\sin(z-b_k)}
+\begin{cases}
(-4)^{\frac{n}2}\cdot 2\cos(2z+\mathcal B-a),&\text{if $n$ is even;}\\
(-4)^{\frac{n-1}2}\cdot\sin(2z+\mathcal B-a),&\text{if $n$ is odd.}
\end{cases}
\end{align*}
It is clear that replacing~$a$ with~$a+\frac \pi2$ and/or~$b_k$
with~$b_k+\frac \pi2$ one immediately obtains counterparts of the above formulas, in which the corresponding sines on the left hand side are replaced by cosines.
\medskip

We can further substitute particular forms of the polynomial $P(w)$ in \eqref{eq:Chuplus1}  and \eqref{eq:Chuplus2}. For instance, following the structure of Braaksma's and Meijer's identities \eqref{eq:mainBraaksma} and \eqref{eq:MeijerLemma4} and examples from \cite{Chu} define
\begin{equation}\label{eq:Psineprod}
P(e^{iz})=\prod_{k=0}^{n+1}\sin(z-a_k)\eqqcolon \sin(z-\a),
\end{equation}
so that (cf. Chu \cite[p.233]{Chu})
$$
P(w)=\frac{e^{-i\mathcal{A}}}{(2i)^{n+2}w^{n+2}}\prod_{k=0}^{n+1}(w^2-e^{2ia_k})
=\frac{e^{-i\mathcal{A}}}{(2i)^{n+2}}w^{n+2}+\delta w^{n}+\cdots+(-1)^{n+2}e^{i\mathcal{A}}\frac{w^{-n-2}}{(2i)^{n+2}},
$$
where $\mathcal{A}=\sum_{k=0}^{n+1}a_k$ and $(-1)^nP(-w)=P(w)$.  Substituting this into \eqref{eq:Chuplus1} we recover formula \eqref{eq:BraaksmaKappa1}:
\begin{equation}\label{eq:ChuKappa1}
\frac{\sin(z-\a)}{\sin(z-\b)}=\sin(z-\nu)
+\sum_{k=0}^{n}\frac{\sin(b_k-\a)}{\sin(z-b_k)\sin(b_k-\b_{[k]})},
\end{equation}
where
$$
\nu=\mathcal{A}-\mathcal{B}=\sum_{k=0}^{n+1}a_k-\sum_{k=0}^{n}b_k.
$$
Using $P(w)$ from \eqref{eq:Psineprod} in \eqref{eq:sumres1}, we obtain the following exotic identity
$$
4\sum_{k=0}^{n}\frac{e^{i(\nu+b_k)}
    \sin(b_k-\a)}{\sin(b_k-\b_{[k]})}=\sum_{k=0}^{n+1}e^{2ia_k}-\sum_{k=0}^{n}e^{2ib_k}-e^{2i\nu}.
$$
Separating real and imaginary parts of the above identity we can get similar formulas with cosines and sines in place of  exponentials. 

Setting~$a=a_{n+1}$ and $a_k=b_k-\pi/2$ for $k=0,\ldots,n$ in \eqref{eq:ChuKappa1} we obtain:
$$
\sin(z-a)\cot(z-\b)=\sin(z-a+(n+1)\pi/2)
+\sum_{k=0}^{n}\frac{\sin(b_k-a)}{\sin(z-b_k)}\cot(b_k-\b_{[k]}).
$$

Suppose 
\begin{equation}\label{eq:Psineprod2}
P(e^{iz})=\prod_{k=0}^{n+2}\sin(z-a_k) \eqqcolon \sin(z-\a),
\end{equation}
so that (cf. Chu \cite[p.233]{Chu})
$$
P(w)\!=\!\frac{e^{-i\mathcal{A}}}{(2i)^{n+3}w^{n+3}}\prod_{k=0}^{n+2}(w^2-e^{2ia_k})
\!=\!\frac{e^{-i\mathcal{A}}}{(2i)^{n+3}}w^{n+3}-\frac{e^{-i\mathcal{A}}}{(2i)^{n+3}}w^{n+1}\sum_{k=0}^{n+2}e^{2ia_k}+\cdots+(-1)^{n+3}e^{i\mathcal{A}}\frac{w^{-n-3}}{(2i)^{n+3}},
$$
where $\mathcal{A}=\sum_{k=0}^{n+2}a_k$ and $(-1)^nP(-w)=-P(w)$.  Substituting this into \eqref{eq:Chuplus2} we obtain:
\begin{multline*}
(2i)^{-n-1}e^{-i\mathcal{B}}\frac{\sin(z-\a)}{\sin(z-\b)}=
\frac{e^{-i\mathcal{A}}}{(2i)^{n+3}}w^2+\frac{(-1)^{n+1}}{w^2\prod_{k=0}^{n}\gamma_k^2}
\frac{(-1)^{n+3}e^{i\mathcal{A}}}{(2i)^{n+3}}
\\
+\frac{e^{-i\mathcal{A}}}{(2i)^{n+3}}\sum_{k=0}^{n}e^{2ib_k}-\frac{e^{-i\mathcal{A}}}{(2i)^{n+3}}\sum_{k=0}^{n+2}e^{2ia_k}+\sum\limits_{k=0}^{n}\frac{\gamma_k^{n+1}P(\gamma_k)}{\prod_{j\ne{k}}(\gamma_k^2-\gamma_j^2)(w^2-\gamma_k^2)},
\end{multline*}
or
\begin{multline}\label{eq:ChuKappa2}
\frac{\sin(z-\a)}{\sin(z-\b)}=
-\frac{1}{2}\cos(\mathcal{B}-\mathcal{A}+2z)
+\frac{e^{i(\mathcal{B}-\mathcal{A})}}{4}\Big(\sum_{k=0}^{n+2}e^{2ia_k}-\sum_{k=0}^{n}e^{2ib_k}\Big)
\\
+\sum\limits_{k=0}^{n}\frac{e^{i(b_k-z)}\sin(b_k-\a)}{\sin(z-b_k)\sin(b_k-\b_{[k]})}.
\end{multline}
Separating the real and imaginary parts leads to the identities:
\begin{multline}\label{eq:ChuKappa2Re}
\frac{\sin(z-\a)}{\sin(z-\b)}=
-\frac{1}{2}\cos(\mathcal{B}-\mathcal{A}+2z)
+\frac{1}{4}\sum_{k=0}^{n+2}\cos(\mathcal{B}-\mathcal{A}+2a_k)
-\frac{1}{4}\sum_{k=0}^{n}\cos(\mathcal{B}-\mathcal{A}+2b_k)
\\
+\sum\limits_{k=0}^{n}\frac{\sin(b_k-\a)}{\sin(b_k-\b_{[k]})}\cot(z-b_k)
\end{multline}
and
\begin{equation}\label{eq:ChuKappa2Im}
4\sum\limits_{k=0}^{n}\frac{\sin(b_k-\a)}{\sin(b_k-\b_{[k]})}=
\sum_{k=0}^{n+2}\sin(\mathcal{B}-\mathcal{A}+2a_k)-\sum_{k=0}^{n}\sin(\mathcal{B}-\mathcal{A}+2b_k).
\end{equation}
Formula \eqref{eq:sumres1} reduces to triviality in this case.

In a similar fashion, taking equation \eqref{eq:Chuplus3}
with~$\sin(z-\a)\coloneqq \prod_{k=0}^{n+3}\sin(z-a_k)$ we recover formula~\eqref{eq:Fkappa3}:
\begin{multline}\label{eq:ChuKappa3}
\frac{\sin(z-\a)}{\sin(z-\b)}=
-\frac{1}{4}\sin(3z+\mathcal{B}-\mathcal{A})
+\frac{1}{4}\sum_{k=0}^{n+3}\sin(z+\mathcal{B}-\mathcal{A}+2a_k)
\\
-\frac{1}{4}\sum_{k=0}^{n}\sin(z+\mathcal{B}-\mathcal{A}+2b_k)
+\sum_{k=0}^{n}\frac{\sin(b_k-\a)}{\sin(z-b_k)\sin(b_k-\b_{[k]})},
\end{multline}
while application of formula \eqref{eq:sumres3} yields the following identity
\begin{multline*}
\sum_{0\le j_1<j_2\le n+3}e^{2i(a_{j_1}+a_{j_2})}
-\sum_{0\le j\le n+3}e^{2ia_{j}}\sum_{0\le k\le n}e^{2ib_{k}}
+\sum_{0\le k\le n}e^{4ib_{k}}
+\sum_{0\le k_1<k_2\le n}e^{2i(b_{k_1}+b_{k_2})}
\\
=e^{2i(\mathcal{A}-\mathcal{B})}\sum_{0\le j\le n+3}e^{-2ia_{j}}-e^{2i(\mathcal{A}-\mathcal{B})}\sum_{0\le k\le n}e^{-2ib_{k}}
+16e^{i(\mathcal{A}-\mathcal{B})}\sum_{k=0}^{n}\frac{e^{ib_k}\sin(b_k-\a)}{\sin(b_k-\b_{[k]})}.
\end{multline*}
Separating real and imaginary parts of the above identity we can get similar formulas with cosines and sines in place of exponentials.

We can also use a generic trigonometric polynomial in~\eqref{eq:Chuplus1} and~\eqref{eq:Chuplus2} by writing~$w=e^{iz}$ and~$\gamma_k=e^{ib_k}$. For instance, taking $T(z)=P_{n+3}(e^{iz})$ after some calculations brings \eqref{eq:Chuplus2} to the form
\begin{align}\nonumber
&\frac{(2i)^{-n-1}T(z)}{\sin(z-\b)}
=c_{n+3}e^{2iz+i\mathcal B}+(-1)^{n+1}c_{-n-3}e^{-2iz-i\mathcal B}+
e^{i\mathcal B}\Big(c_{n+1}+c_{n+3}\sum\nolimits_{k=0}^n e^{2ib_j}\Big)
\\\label{eq:TigFormExample}
&+c_{n+2}e^{iz+i\mathcal B}+(-1)^{n+1} c_{-n-2}e^{-iz-i\mathcal B}
+\sum_{k=0}^n\frac{ {T(b_k)\cos\frac{z-b_k}2} +(-1)^n i{T(b_k+\pi)\sin\frac{z-b_k}2}}
{(2i)^{n+1}e^{i\frac{z-b_k}2}\sin(z-b_k)\sin(b_k-\b_{[k]})}.
\end{align}
\medskip

Our next goal is to extend \eqref{eq:TigFormExample} to a $2\pi$-periodic trigonometric polynomial~$T(z)=\sum_{t=-m}^m c_t e^{itz}$ of arbitrary degree $m$. One can split such a polynomial into its $\pi$-periodic and $\pi$-antiperiodic parts according to, respectively:
\begin{align*}
  T_p(z)&=\frac{T(z)+T(z+\pi)}2=\sum_{-m\le t\le m}^{t~\text{even}} c_{t} e^{itz}=\sum_{l=-\lfloor m/2\rfloor}^{\lfloor m/2\rfloor} c_{2l} e^{2ilz}
          \qquad\text{and\hspace{-3em}}
  \\
  T_a(z)&=\frac{T(z)-T(z+\pi)}2=\sum_{-m\le t\le m}^{t~\text{odd}} c_{t} e^{itz} =\!\!\sum_{l=-\lfloor (m+1)/2\rfloor}^{\lfloor (m-1)/2\rfloor}\!\! c_{2l+1} e^{i(2l+1)z}
          ,
\end{align*}
so that~$T(z)=T_p(z)+T_a(z)$. In the sequel we use the right continuous version of the sign function, i.e. 
$$
\sign(t)=\begin{cases}
 1, & t\ge0
 \\
 -1, & t<0
\end{cases}.
$$

\begin{theorem}\label{th:new}
Recall that $\mathcal B=\sum_{k=0}^{n}b_k$ and define
\begin{equation}\label{eq:FE-defined}
    F_k^\pm=\sum\nolimits_{j=0}^{\lfloor\frac{m-n-k-1}{2}\rfloor} c_{\pm (2j+k+n+1)}E_j^\pm, 
\quad\text{where}\quad
    E_j^\pm=(\pm 2i)^{n+1}\!\!\!\!\sum_{0\le k_1\le \cdots \le k_j\le n}\!\!\!\! e^{\pm2i(b_{k_1}+\dots+b_{k_j})\pm i\mathcal B}  
\end{equation}
retains its meaning from \eqref{eq:Ej-defined}. Empty sums here are understood as zero \emph{(}in particular, $F_0^\pm=0$ unless~$m>n$\emph{)}. For any exponential polynomial~$T(z)=\sum_{t=-m}^m c_t e^{itz}$ the following identities hold\emph{:}
\begin{subequations}
\label{eq:gen.trig.poly}
\begin{align}
\frac{T(z)}{\sin(z-\b)}
\label{eq:gen.trig.poly.m}
&=
\sum\nolimits_{k=1}^{m-n-1}
\left(F_k^+ e^{ikz} + F_k^- e^{-ikz}\right)
+
\sum_{k=0}^n\frac{\sum_{t=-m}^m 
    c_t e^{itb_k+i\nu_{t,n}(z-b_k)} }
{\sin(b_k-\b_{[k]})\sin(z-b_k)}
\\
\label{eq:gen.trig.poly.p}
&=
\sum\nolimits_{k=0}^{m-n-1}
\left( F_k^+ e^{ikz} + F_k^- e^{-ikz}\right)
+
\sum_{k=0}^n\frac{\sum_{t=-m}^m 
    c_t e^{itb_k-i\nu_{t,n}(z-b_k)} }
{\sin(b_k-\b_{[k]})\sin(z-b_k)}
,
\end{align}
\end{subequations}
where $\nu_{t,n}=\sign(t)\cdot\big(t+n-2\lfloor(t+n)/2\rfloor\big)$.  Furthermore, for even~$n$ we have
\begin{align}
\nonumber
\frac{T(z)}{\sin(z-\b)}
&=
F_0^\mp + \sum\nolimits_{k=1}^{m-n-1}
\left( F_k^+ e^{ikz} + F_k^- e^{-ikz}\right)
+
\sum_{k=0}^n\frac{T_p(b_k)+T_a(b_k) e^{\pm i (z-b_k)} }
{\sin(b_k-\b_{[k]})\sin(z-b_k)}
\\\label{eq:gen.trig.poly.pm.e}
=\frac{F_0^+ + F_0^-}2 + 
&\sum\nolimits_{k=1}^{m-n-1}
\left( F_k^+ e^{ikz} + F_k^- e^{-ikz}\right)
+\sum_{k=0}^n\frac{T_p(b_k)+T_a(b_k) \cos(z-b_k) }
{\sin(b_k-\b_{[k]})\sin(z-b_k)}
,
\end{align}
and for odd~$n$ we have
\begin{align}
\nonumber
\frac{T(z)}{\sin(z-\b)}
&=F_0^\mp + \sum\nolimits_{k=1}^{m-n-1}
\left( F_k^+ e^{ikz} + F_k^- e^{-ikz}\right)
+
\sum_{k=0}^n\frac{T_p(b_k) e^{\pm i (z-b_k)} + T_a(b_k) }
{\sin(b_k-\b_{[k]})\sin(z-b_k)}
\\\label{eq:gen.trig.poly.pm.o}
=\frac{F_0^+ + F_0^-}2 + &\sum\nolimits_{k=1}^{m-n-1}
\left( F_k^+ e^{ikz} + F_k^- e^{-ikz}\right)
+
\sum_{k=0}^n\frac{T_p(b_k)\cos(z-b_k) + T_a(b_k) }
{\sin(b_k-\b_{[k]})\sin(z-b_k)}.
\end{align}
Note that $F_0^{\pm}=0$ if $m-n<1$.
\end{theorem}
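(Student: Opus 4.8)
The plan is to prove all four displayed identities by a single mechanism: expand $T(z)/\sin(z-\b)=\sum_{t=-m}^{m}c_t\,e^{itz}/\sin(z-\b)$ termwise, applying the Corollary (formula~\eqref{eq:exp.positive.powers} to each $t\ge0$ with $m$ replaced by $t$, and formula~\eqref{eq:exp.negative.powers} to each $t<0$ with $m$ replaced by $-t$), and then regroup. For every $t$ the admissible parameter in the Corollary is any integer $\vartheta_t\le|t|+n$ of the same parity as $|t|+n$; I will always take $\vartheta_t\in\{-1,0,1\}$. When $|t|+n$ is even the only such choice is $\vartheta_t=0$, whereas when $|t|+n$ is odd there are two choices $\vartheta_t=\pm1$, and this freedom is exactly what produces the several versions~\eqref{eq:gen.trig.poly.m}--\eqref{eq:gen.trig.poly.pm.o}.

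First I would record the effect of a single term. For $t\ge0$ the pole part of~\eqref{eq:exp.positive.powers} contributes $c_t e^{it b_k}e^{i\vartheta_t(z-b_k)}/(\sin(b_k-\b_{[k]})\sin(z-b_k))$, while for $t<0$ the pole part of~\eqref{eq:exp.negative.powers} contributes $c_t e^{it b_k}e^{-i\vartheta_t(z-b_k)}/(\cdots)$, after using $e^{\mp i(|t|-\vartheta_t)b_k}=e^{it b_k}e^{\pm i\vartheta_t b_k}$. Choosing the nonnegative representative $\vartheta_t=(|t|+n)\bmod2$ makes the exponent equal to $\sign(t)\vartheta_t=\nu_{t,n}$ by the very definition of $\nu_{t,n}$, so summing the pole parts over $t$ reproduces the numerator $\sum_t c_t e^{it b_k+i\nu_{t,n}(z-b_k)}$ of~\eqref{eq:gen.trig.poly.m}. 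For the polynomial parts I reindex: a term of~\eqref{eq:exp.positive.powers} is $c_t E_j^+e^{i(t-n-2j-1)z}$, so setting $k=t-n-2j-1$ (equivalently $t=2j+k+n+1$) and summing over all $t$ and $j$ collects the coefficient of $e^{ikz}$ as precisely $F_k^+$ from~\eqref{eq:FE-defined}; similarly the terms of~\eqref{eq:exp.negative.powers} give $F_k^-e^{-ikz}$. With the nonnegative choice of $\vartheta_t$ the smallest attainable index is $k=\vartheta_t+1\ge1$, so no $k=0$ term is produced and the polynomial sum runs over $1\le k\le m-n-1$, which is~\eqref{eq:gen.trig.poly.m}.

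Next I would obtain~\eqref{eq:gen.trig.poly.p} by switching, for every odd-parity $t$, to the other representative $\vartheta_t=-\big((|t|+n)\bmod2\big)$. This flips the pole exponent to $-\nu_{t,n}$ and, because the upper summation limit in the Corollary increases by one, adjoins exactly the $k=0$ polynomial term; summing these boundary contributions over positive and negative $t$ yields $F_0^+ +F_0^-$, which is why the sum in~\eqref{eq:gen.trig.poly.p} starts at $k=0$. The even/odd-$n$ refinements come from a mixed choice in which the two signs of $\vartheta_t$ are used on positive versus negative $t$ so that a single factor $e^{\pm i(z-b_k)}$ is common to all terms of one parity class. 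For even $n$ the odd-$t$ terms are the parity-odd ones; forcing them all to carry $e^{+i(z-b_k)}$ (respectively $e^{-i(z-b_k)}$) lets me pull out $T_a(b_k)$ while the parity-even terms assemble into $T_p(b_k)$, and the boundary term contributed by the flipped class is $F_0^-$ (respectively $F_0^+$); this is the first line of~\eqref{eq:gen.trig.poly.pm.e}, and its second line follows by averaging the two sign choices, using $\tfrac12(e^{i(z-b_k)}+e^{-i(z-b_k)})=\cos(z-b_k)$ and $\tfrac12(F_0^++F_0^-)$. For odd $n$ the roles of $T_p$ and $T_a$ interchange because now the even-$t$ terms are the parity-odd ones, giving~\eqref{eq:gen.trig.poly.pm.o}.

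The conceptual content is entirely in the Corollary; the main obstacle is purely bookkeeping. The delicate point is tracking the boundary index $k=0$, since whether it lands in the polynomial sum or is absorbed into the pole sum depends on the sign of $\vartheta_t$ for each parity-odd $t$, and it is this single term that accounts for the discrepancy between the lower limits $k=1$ and $k=0$ and for the stray $F_0^\pm$ summands in~\eqref{eq:gen.trig.poly.pm.e}--\eqref{eq:gen.trig.poly.pm.o}. I would therefore treat the $k=0$ contribution separately and verify, via the reindexing $t=2j+n+1$, that it equals $F_0^+$ or $F_0^-$; the remaining verifications (the reindexing $t=2j+k+n+1$, the ranges of $k$, and the degenerate cases $m\le n$ and $t=0$, where $\sign(0)=1$ must be used) are routine.
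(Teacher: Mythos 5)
Your proposal is correct and takes essentially the same route as the paper's own proof: both expand $T(z)$ termwise, apply \eqref{eq:exp.positive.powers} (for $t\ge0$) and \eqref{eq:exp.negative.powers} (for $t<0$) with $\vartheta\in\{-1,0,1\}$ chosen by parity --- rounding down for \eqref{eq:gen.trig.poly.m}, rounding up for \eqref{eq:gen.trig.poly.p}, and the mixed sign choices $\vartheta=\pm\nu_{t,n}$ on positive versus negative $t$ for \eqref{eq:gen.trig.poly.pm.e}--\eqref{eq:gen.trig.poly.pm.o} --- then regroup the polynomial parts via the reindexing $t=2j+k+n+1$ into the $F_k^\pm$ and average the two sign choices for the cosine forms. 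The only difference is presentational: the paper writes out the reindexing chain explicitly (with an auxiliary $\mathcal E_j^\pm$ notation), whereas you describe the same bookkeeping, including the boundary $k=0$ contributions $F_0^\pm$, more informally.
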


\begin{proof}[Proof of Theorem~\ref{th:new}]
    Denote
    \[
        \mathcal E_j^\pm=\begin{cases}
            E_{j/2}^\pm, &\text{if~$j$ is even};\\
            0, &\text{if~$j$ is odd.}
        \end{cases}
    \]
    Then  we have
    \begin{multline*}
        \sum_{t=0}^mc_t\sum_{j=0}^{\!\!\lfloor(t-n)/2\rfloor-1\!\!}
        E_{j}^+ e^{i(t-n-2j-1)z}
        =
        \sum_{t=0}^{\! m-n-2 \!} c_{t+n+2}\sum_{j=0}^{\lfloor t/2 \rfloor}
        E_{j}^+ e^{i(t-2j+1)z}
        \\
        =
        \!\!\sum_{t=0}^{m-n-2}\! c_{t+n+2}\sum_{k=0}^{t}
        \mathcal E_{k}^+ e^{i(t+1-k)z}
        =
        \!\!\sum_{t=0}^{m-n-2}\;\sum_{k=1}^{t+1}
         c_{t+n+2}\, \mathcal E_{t+1-k}^+ e^{ikz}
        =
        \!\!\sum_{k=1}^{m-n-1}\! e^{ikz}\sum_{ t=k-1 }^{m-n-2}\!
        c_{t+n+2}\, \mathcal E_{t+1-k}^+
        \\
        =
        \!\!\sum_{k=1}^{m-n-1}\! e^{ikz}\sum_{t=0 }^{m-n-k-1}\!\!
        c_{t+k+n+1}\, \mathcal E_{t}^+
        =
        \!\!\sum_{k=1}^{m-n-1}\! e^{ikz}\sum_{ j=0 }^{\lfloor(m-n-1-k)/2\rfloor}\!\!\!
        c_{2j+k+n+1} E_{j}^+
        ,
    \end{multline*}
    where the first equality results from the fact that the  summands on the left-hand side with~$t\le n+1$ do not contribute the sum. In a similar fashion,
    \begin{equation*}
        \sum_{t=1}^mc_{-t}\!\!\sum_{j=0}^{\!\!\lfloor(t-n)/2\rfloor-1\!\!}
        E_{j}^- e^{-i(t-n-2j-1)z}
       =\sum_{k=1}^{m-n-1}\! e^{-ikz}\sum_{ j=0 }^{\lfloor(m-n-1-k)/2\rfloor}\!\!\!
        c_{-2j-k-n-1} E_{j}^-.
    \end{equation*}
We now apply \eqref{eq:exp.positive.powers} (for $t\ge0$) and \eqref{eq:exp.negative.powers} (for $t<0$) to each summand on the left-hand side of \eqref{eq:gen.trig.poly.m} of the form 
$$
\frac{c_te^{itz}}{\sin(z-\b)}
$$
while choosing $\vartheta=\varepsilon_{t,n}\in\{0,1\}$ for each $t$ from the condition $(t-n-\varepsilon_{t,n})/2=\lfloor(t-n)/2\rfloor$. We thus obtain the equality
    \begin{multline*}
        \frac{\sum_{t=-m}^m c_te^{itz}}{\sin(z-\b)}
        =
        \!\sum_{k=1}^{m-n-1}\left( e^{ikz}\sum_{ j=0 }^{\lfloor(m-n-1-k)/2\rfloor}\!\!\!
        c_{2j+k+n+1} E_{j}^+
        +
        e^{-ikz}\sum_{ j=0 }^{\lfloor(m-n-1-k)/2\rfloor}\!\!\!
        c_{-2j-k-n-1} E_{j}^-\right)
        \\
        +\sum_{k=0}^n\frac{\sum_{t=0}^m c_t e^{itb_k}e^{i\varepsilon_{t,n}(z-b_k)}+\sum_{t=-1}^{-m} c_{t} e^{itb_k}e^{-i\varepsilon_{t,n}(z-b_{k})}}
        {\sin(b_k-\b_{[k]})\sin(z-b_k)}
        ,
    \end{multline*}
    which implies~\eqref{eq:gen.trig.poly.m} in view of $\nu_{t,n}=\varepsilon_{t,n}$ for $t\ge0$ and $\nu_{t,n}=-\varepsilon_{t,n}$ for $t<0$.

A similar calculation as above, but choosing rounding upwards shows that
\begin{equation*}
    \sum_{t=1}^mc_{\pm t}\!\!\sum_{j=0}^{\!\!\lfloor(t-n+1)/2\rfloor-1\!\!}
    E_{j}^\pm e^{\pm i(t-n-2j-1)z}=
    \sum_{k=0}^{m-n-1}\! e^{\pm ikz}\sum_{ j=0 }^{\lfloor(m-n-1-k)/2\rfloor}\!\!\!
    c_{\pm(2j+k+n+1)} E_{j}^\pm.
\end{equation*}
Therefore, application of~\eqref{eq:exp.positive.powers} (for $t\ge0$)
and~\eqref{eq:exp.negative.powers} (for $t<0$) to each summand on the left-hand side
of~\eqref{eq:gen.trig.poly.p} while choosing $\vartheta=-\varepsilon_{t,n}\in\{0,-1\}$
with~$\varepsilon_{t,n}$ as above yields:
\begin{multline*}
    \frac{\sum_{t=-m}^m c_te^{itz}}{\sin(z-\b)}
    =
    \!\sum_{k=0}^{m-n-1}\left( e^{ikz}\sum_{ j=0 }^{\lfloor(m-n-1-k)/2\rfloor}\!\!\!
        c_{2j+k+n+1} E_{j}^+
        +
        e^{-ikz}\sum_{ j=0 }^{\lfloor(m-n-1-k)/2\rfloor}\!\!\!
        c_{-2j-k-n-1} E_{j}^-\right)
    \\
    +\sum_{k=0}^n\frac{\sum_{t=0}^m c_t e^{itb_k}e^{-i\varepsilon_{t,n}(z-b_k)}+\sum_{t=-1}^{-m} c_{t} e^{itb_k}e^{i\varepsilon_{t,n}(z-b_{k})}}
    {\sin(b_k-\b_{[k]})\sin(z-b_k)}
    ,
\end{multline*}
and hence~\eqref{eq:gen.trig.poly.p}.

To establish \eqref{eq:gen.trig.poly.pm.e} and \eqref{eq:gen.trig.poly.pm.o} we apply
\eqref{eq:exp.positive.powers} (for $t\ge0$) and \eqref{eq:exp.negative.powers} (for $t<0$) with
$\vartheta=\nu_{t,n}$ to each summand on the left-hand side of \eqref{eq:gen.trig.poly.pm.e} to
get
    \begin{multline}\label{eq:gen.trig.poly.pm.calc1}
        \frac{\sum_{t=-m}^m c_te^{itz}}{\sin(z-\b)}
        =\!\sum_{k=1}^{m-n-1}\left(e^{ikz}\!\!\!\!\sum_{ j=0 }^{\lfloor(m-n-1-k)/2\rfloor}\!\!\!
        c_{2j+k+n+1} E_{j}^+
        +
        e^{-ikz}\!\!\!\!\sum_{ j=0 }^{\lfloor(m-n-1-k)/2\rfloor}\!\!\!
        c_{-2j-k-n-1} E_{j}^-\right)
        \\
        +\sum_{j=0}^{\lfloor(m-n-1)/2\rfloor}\!\!\!
        c_{-2j-n-1} E_{j}^{-}
        +\sum_{k=0}^n\frac{S_{n}(k,m)}{\sin(b_k-\b_{[k]})\sin(z-b_k)},
    \end{multline}
    where
$$
S_{n}(k,m)=\begin{cases}
    \displaystyle{\sum_{-m\le t \le m}^{t~\text{even}}}c_{t}e^{itb_k} + 
        e^{i(z-b_k)}\displaystyle{\sum_{-m\le t \le m}^{t~\text{odd}}}c_{t}e^{itb_k}=T_p(b_k)+e^{i(z-b_k)}T_a(b_k), & n~\text{is even;}
        \\[20pt]
        \displaystyle{\sum_{-m\le t \le m}^{t~\text{odd}}}c_{t}e^{itb_k} + 
        e^{i(z-b_k)}\displaystyle{\sum_{-m\le t \le m}^{t~\text{even}}}c_{t}e^{itb_k}=T_a(b_k)+e^{i(z-b_k)}T_p(b_k), & n~\text{is odd}.
\end{cases}
$$
In a similar fashion, on applying~\eqref{eq:exp.positive.powers} for $t\ge0$
and~\eqref{eq:exp.negative.powers} for $t<0$ while~$\vartheta=-\nu_{t,n}$ we arrive at
   \begin{multline}\label{eq:gen.trig.poly.pm.calc2}
        \frac{\sum_{t=-m}^m c_te^{itz}}{\sin(z-\b)}
        =\!\sum_{k=1}^{m-n-1}\left(e^{ikz}\!\!\!\!\sum_{j=0}^{\lfloor(m-n-1-k)/2\rfloor}\!\!\!
        c_{2j+k+n+1}E_{j}^+
        +
        e^{-ikz}\!\!\!\!\sum_{j=0}^{\lfloor(m-n-1-k)/2\rfloor}\!\!\!
        c_{-2j-k-n-1} E_{j}^-\right)
        \\
        +\sum_{j=0}^{\lfloor(m-n-1)/2\rfloor}\!\!\!
        c_{2j+n+1} E_{j}^{+}
        +\sum_{k=0}^n\frac{\hat{S}_{n}(k,m)}{\sin(b_k-\b_{[k]})\sin(z-b_k)},
    \end{multline}
    where
$$
\hat{S}_{n}(k,m)=\begin{cases}
    \displaystyle{\sum_{-m\le t \le m}^{t~\text{even}}}c_{t}e^{itb_k} + 
        e^{-i(z-b_k)}\displaystyle{\sum_{-m\le t \le m}^{t~\text{odd}}}c_{t}e^{itb_k}=T_p(b_k)+e^{-i(z-b_k)}T_a(b_k), & n~\text{is even;}
        \\[20pt]
        \displaystyle{\sum_{-m\le t \le m}^{t~\text{odd}}}c_{t}e^{itb_k} + 
        e^{-i(z-b_k)}\displaystyle{\sum_{-m\le t \le m}^{t~\text{even}}}c_{t}e^{itb_k}=T_a(b_k)+e^{-i(z-b_k)}T_p(b_k), & n~\text{is odd}.
\end{cases}
$$
Depending on parity of~$n$, we combine the respective cases of the
identities~\eqref{eq:gen.trig.poly.pm.calc1} and~\eqref{eq:gen.trig.poly.pm.calc2} to obtain the
first equalities in~\eqref{eq:gen.trig.poly.pm.e} for even~$n$ and~\eqref{eq:gen.trig.poly.pm.o}
for odd~$n$. The second equalities follow by taking arithmetic averages
of~\eqref{eq:gen.trig.poly.pm.calc1} and~\eqref{eq:gen.trig.poly.pm.calc2} instead of combining
them.
\end{proof}

Theorem~\ref{th:new} enables us to extend formulas \eqref{eq:ChuKappa1}, \eqref{eq:ChuKappa2}, \eqref{eq:ChuKappa2Re}, \eqref{eq:ChuKappa2Im} and  \eqref{eq:ChuKappa3} to the case of arbitrary number of sine functions in the numerator. More specifically, define 
\begin{equation}\label{eq:T_prod_sin}
    T(z)=\prod_{t=0}^r\sin(z-a_t)\eqqcolon \sin(z-\a).
\end{equation}
This trigonometric  polynomial is ~$\pi$-periodic for odd~$r$ and $\pi$-antiperiodic for even~$r$. Conversely, up to multiplication by a constant, every~$\pi$-periodic or~$\pi$-antiperiodic trigonometric polynomial may, in principle, be written in the form~\eqref{eq:T_prod_sin}, which is clear from~\eqref{eq:sin.to.exp} below.

\bigskip
\begin{corollary}\label{cr:sine.over.sine}
Let~$T(z)$ be as in~\eqref{eq:T_prod_sin}. Denote~$\mathcal A=\sum\nolimits_{k=0}^ra_k$, as well as~$\varkappa=r-n$ and~$\lambda=\lfloor\varkappa/2\rfloor$. If~$\varkappa$ is odd, then
\[
\frac{\sin(z-\a)}
{\sin(z-\b)}
=
\sum_{t=0}^{\lambda}
\mathcal F_{\varkappa-2t}(z)
+
\sum_{k=0}^n
\frac{\sin(b_k-\a)}
{\sin(b_k-\b_{[k]})\cdot \sin(z-b_k)}
,
\]
where the first sum is non-void only for~$\varkappa> 0$, and its terms are (with~$F_{\varkappa-2t}^\pm$ keeping their meaning from Theorem~\ref{th:new})
\begin{multline*}
\mathcal F_{\varkappa-2t}(z)
=F_{\varkappa-2t}^+ e^{i(\varkappa-2t)z} + F_{\varkappa-2t}^- e^{-i(\varkappa-2t)z}
\\[2pt]
=
\sum_{m=0}^t \frac{(-1)^{\lambda + t-m}}{2^{\varkappa-1}} \!\!
\sum_{\substack{0\le j_1<\cdots< j_{t-m}\le r\\[2pt] 0\le k_1\le\cdots\le k_m\le n}}
\!\!\!\!\!\!
\sin\Big((\varkappa-2t)z
+ \mathcal B-\mathcal A+2(b_{k_1}+\cdots+b_{k_m} + a_{j_1}+\cdots+a_{j_{t-m}})\Big).
\end{multline*}
For $m=0$ \emph{(}$m=t$\emph{)} the summation in the inner sum is over $j_1,\ldots,j_{t}$ \emph{(}$k_1,\ldots,k_m$\emph{)} and the numbers $b_{k_l}$ \emph{(}$a_{j_l}$\emph{)} are missing in the rightmost parentheses in the argument of the sine function.   

Similarly, if~$\varkappa$ is even, then 
\begin{align} \label{eq:sine.over.sine-odd.kappa}
\frac{\sin(z-\a)}{\sin(z-\b)}
&=
\mathcal F_0^\pm+\sum_{t=1}^{\lambda} \mathcal F_{\varkappa-2t}(z)
+
\sum_{k=0}^n
\frac{\sin(b_k-\a)}
{\sin(b_k-\b_{[k]})}
\cdot\frac{e^{\pm i(z-b_k)}}{ \sin(z-b_k)}
\\ \nonumber
&=
\sum_{t=0}^{\lambda}
\mathcal F_{\varkappa-2t}(z)
+
\sum_{k=0}^n
\frac{\sin(b_k-\a)}
{\sin(b_k-\b_{[k]})}\cdot\cot(z-b_k)
,
\end{align}
where \emph{(}note that $\mathcal F_0^\pm=\mathcal F_{0}(z)=0$ if $\lambda<0$\emph{)}
\begin{align*}
\mathcal F_0^\pm
&=F_0^\mp
=\frac{e^{\mp i\left(\mathcal B-\mathcal A\right)}}{2^{\varkappa}}
\sum_{m=0}^{\lambda} (-1)^{m}\!\!\!
\sum_{\substack{0\le j_1<\cdots< j_{\lambda-m}\le r\\[2pt] 0\le k_1\le\cdots\le k_m\le n}}\!\!\!
e^{\mp 2i\left(b_{k_1}+\cdots+b_{k_m} + a_{j_1}+\cdots+a_{j_{\lambda-m}}\right)},
\\
\mathcal F_{0}(z)&\equiv \tfrac 12 \big({\mathcal F_{0}^+ + \mathcal F_{0}^-}\big)
\\&=
\sum_{m=0}^{\lambda} \cfrac{(-1)^{m \!\!}}{2^\varkappa}\!\!\!
\sum_{\substack{0\le j_1<\cdots< j_{\lambda-m}\le r\\[2pt] 0\le k_1\le\cdots\le k_m\le n}}
\cos\big(\mathcal B-\mathcal A +2(b_{k_1}+\cdots+b_{k_m} + a_{j_1}+\cdots+a_{j_{\lambda-m}})\big),
\end{align*}
and, for~$t=0,1,\dots,\lambda-1$,
\begin{multline*}
\mathcal F_{\varkappa-2t}(z)
= F_{\varkappa-2t}^+ e^{i(\varkappa-2t)z} + F_{\varkappa-2t}^- e^{-i(\varkappa-2t)z}
\\[2pt]
=
\sum_{m=0}^t \frac{(-1)^{\lambda+t-m}}{2^{\varkappa-1}} \!\!
\sum_{\substack{0\le j_1<\cdots< j_{t-m}\le r\\[2pt] 0\le k_1\le\cdots\le k_m\le n}}
\!\!\!\!\!\!
\cos\Big((\varkappa-2t)z
+ \mathcal B-\mathcal A+2(b_{k_1}+\cdots+b_{k_m} + a_{j_1}+\cdots+a_{j_{t-m}})\Big)
.
\end{multline*}

Moreover, for even~$\varkappa=2\lambda$ we additionally have
$$
\sum_{m=0}^{\lambda} \cfrac{(-1)^{m \!\!}}{4^\lambda}\!\!\!
\sum_{\substack{0\le j_1<\cdots< j_{\lambda-m}\le n+2\lambda\\[2pt] 0\le k_1\le\cdots\le k_m\le n}}
\!\!\!\!\!\!
\sin\big(\mathcal B-\mathcal A +2(b_{k_1}+\cdots+b_{k_m} + a_{j_1}+\cdots+a_{j_{t-m}})\big)=\sum_{k=0}^{n}\frac{\prod_{t=0}^{n+2\lambda} \sin(b_k-a_t)}
{\sin(b_k-\b_{[k]})}.
$$
\end{corollary}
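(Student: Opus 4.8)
The bulk of this corollary is a direct specialization of Theorem~\ref{th:new}: taking $T(z)=\sin(z-\a)=\prod_{t=0}^r\sin(z-a_t)$ and expanding each factor as $\sin(z-a_t)=(e^{iz}e^{-ia_t}-e^{-iz}e^{ia_t})/(2i)$ realizes $T$ as an exponential polynomial whose coefficients $c_t$ are, up to the factor $(2i)^{-(r+1)}e^{\mp i\mathcal A}$, elementary symmetric polynomials in the $e^{\pm 2ia_t}$. Feeding these $c_t$ into~\eqref{eq:gen.trig.poly.m}--\eqref{eq:gen.trig.poly.pm.o} and grouping the $E_j^\pm$ against the symmetric sums produces the stated closed forms for $\mathcal F_{\varkappa-2t}(z)$ and $\mathcal F_0^\pm$; this is bookkeeping with symmetric functions, which I would carry out by matching the parity of $\varkappa$ to that of $r-n$ and reading off $F_k^\pm$ from~\eqref{eq:FE-defined}.

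The genuinely new content is the last (``moreover'') identity, and the plan is to obtain it for free by comparing the two sign variants of the first equality in~\eqref{eq:sine.over.sine-odd.kappa}. Both the ``$+$'' and the ``$-$'' versions equal the same function $\sin(z-\a)/\sin(z-\b)$, so I would subtract them. On the right-hand side the common term $\sum_{t=1}^{\lambda}\mathcal F_{\varkappa-2t}(z)$ cancels, leaving only $\mathcal F_0^+-\mathcal F_0^-$ together with the difference of the two pole sums. The decisive simplification is that the two variants of the pole sum differ solely through the factor $e^{\pm i(z-b_k)}$, and
\[
\frac{e^{i(z-b_k)}-e^{-i(z-b_k)}}{\sin(z-b_k)}=2i ,
\]
so the apparent poles at $z=b_k$ disappear and every summand collapses to the $z$-independent quantity $2i\sin(b_k-\a)/\sin(b_k-\b_{[k]})$. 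The subtraction therefore produces the purely constant relation
\[
2i\sum_{k=0}^n\frac{\sin(b_k-\a)}{\sin(b_k-\b_{[k]})}=\mathcal F_0^--\mathcal F_0^+ .
\]

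It then remains to insert the explicit expression for $\mathcal F_0^\pm$ supplied in the statement. Because $\mathcal F_0^+$ carries the prefactor $e^{-i(\mathcal B-\mathcal A)}$ and the exponentials $e^{-2i(b_{k_1}+\cdots+a_{j_{\lambda-m}})}$, while $\mathcal F_0^-$ carries the opposite signs, the difference $\mathcal F_0^--\mathcal F_0^+$ pairs each exponential with its complex conjugate, and $e^{i\theta}-e^{-i\theta}=2i\sin\theta$ turns every term into $2i\sin\!\big(\mathcal B-\mathcal A+2(b_{k_1}+\cdots+a_{j_{\lambda-m}})\big)$. Dividing by $2i$, replacing $2^{\varkappa}$ by $4^{\lambda}$, and recalling that $\sin(b_k-\a)=\prod_{t=0}^{r}\sin(b_k-a_t)=\prod_{t=0}^{n+2\lambda}\sin(b_k-a_t)$ since $r=n+\varkappa=n+2\lambda$, reproduces the claimed identity verbatim.

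The argument carries no deep difficulty; the one thing to watch is the bookkeeping of the two $\pm$ conventions — ensuring that the ``$+$'' line of~\eqref{eq:sine.over.sine-odd.kappa} is paired with $\mathcal F_0^+$ and with $e^{+i(z-b_k)}$, and likewise for ``$-$'' — together with the correct reading of the prefactors in $\mathcal F_0^\pm$. I expect this sign-tracking to be the main (modest) obstacle, since a single misassignment would flip the sign of the result; the cancellation of all $z$-dependence is exactly what guarantees that the left-hand side is the $z$-free constant displayed.
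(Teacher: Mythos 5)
Your proposal is correct and follows essentially the same route as the paper: the main identities are obtained by expanding $\sin(z-\a)$ as an exponential polynomial with elementary-symmetric coefficients in $e^{\pm 2ia_t}$ and substituting into Theorem~\ref{th:new} (specifically \eqref{eq:gen.trig.poly.pm.e}--\eqref{eq:gen.trig.poly.pm.o}), and the final ``moreover'' identity is exactly the paper's step of taking the difference of the plus and minus cases of \eqref{eq:sine.over.sine-odd.kappa}, with the factor $e^{i(z-b_k)}-e^{-i(z-b_k)}=2i\sin(z-b_k)$ cancelling the poles. Your sign bookkeeping ($2i\sum_k \sin(b_k-\a)/\sin(b_k-\b_{[k]})=\mathcal F_0^--\mathcal F_0^+$) is also consistent with the stated formulas.
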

\begin{proof}
On the one hand,
\begin{equation}\label{eq:sin.to.exp}
T(z)=(2i)^{-r-1}e^{-i(r+1)z-i\mathcal A} \prod\nolimits_{t=0}^r(e^{2iz}-e^{2ia_t})
= \sum\nolimits_{k=-r-1}^{r+1} c_{k}e^{ikz},
\end{equation}
where~$c_{r+1-k}=0$ if~$k$ is odd, and
\[
c_{r+1-k}=c_{r+1-2t}=(2i)^{-r-1}(-1)^{t}\sum_{0\le j_1<\cdots< j_{t}\le r}
e^{2i(a_{j_1}+\cdots+a_{j_{t}})-i\mathcal A}
\]
if $k\eqqcolon 2t$ is even (it $t=0$, then $c_{r+1}=(2i)^{-r-1}e^{-i\mathcal A}$). However, one can alternatively write
\(
T(z)=(-2i)^{-r-1}e^{i(r+1)z+i\mathcal A} \prod\nolimits_{t=0}^r(e^{-2iz}-e^{-2ia_t})
\), whence
\[
c_{2t-r-1}=
(-2i)^{-r-1}(-1)^t\sum_{0\le j_1<\cdots< j_{t}\le r}
e^{- 2i(a_{j_1}+\cdots+a_{j_{t}}) + i\mathcal A},
\]
and, therefore,
\[
c_{\pm(r+1-2t)}=
(-1)^t (\pm 2i)^{-r-1}\sum_{0\le j_1<\cdots< j_{t}\le r}
e^{\pm 2i(a_{j_1}+\cdots+a_{j_{t}}) \mp i\mathcal A}.
\]
The last formula holds for~$t$ running over~$0,1,\dots,r+1$, although we actually need at most a half of this range --- as each of the coefficients appears twice.

On the other hand,~\eqref{eq:FE-defined} yields
\[
    F_{\varkappa-k}^\pm=(\pm 2i)^{n+1}
    \sum_{m=0}^{\lfloor{k}/{2}\rfloor} c_{\pm (r+1+2m-k)}
    \sum_{0\le k_1\le \cdots \le k_m\le n}\!\! e^{\pm i\left(\mathcal B + 2(b_{k_1}+\dots+b_{k_m})\right)},
\]
whence $F_{\varkappa-2t-1}^\pm=0$  for~$0\le 2t\le \varkappa-1$ (since $c_{\pm (r+1+2m-2t-1)}=0$), as well as
\[
F_{\varkappa-2t}^\pm
=2^{-\varkappa}\sum_{m=0}^t (-1)^{t-m}\!\!\!\!
\sum_{\substack{0\le j_1<\cdots< j_{t-m}\le r \\[2pt] 0\le k_1\le\cdots\le k_m\le n}}
\!\!\!\!
(\pm i)^{-\varkappa}
e^{\pm i\left(\mathcal B-\mathcal A+2(b_{k_1}+\cdots+b_{k_m} + a_{j_1}+\cdots+a_{j_{t-m}})\right)}
\]
for~$0\le 2t\le \varkappa$. The latter implies that
\[
F_{\varkappa-2t}^\pm e^{\pm i(\varkappa-2t)z}
=
2^{-\varkappa}\sum_{m=0}^t (-1)^{t-m}\!\!\!\!
\sum_{\substack{0\le j_1<\cdots< j_{t-m}\le r \\[2pt] 0\le k_1\le\cdots\le k_m\le n}}
\!\!\!\!
e^{\pm i\left((\varkappa-2t)z+ \mathcal B-\mathcal A +2(b_{k_1}+\cdots+b_{k_m} + a_{j_1}+\cdots+a_{j_{t-m}})- \varkappa\pi/2\right)},
\]
so the required formula for \(F_{\varkappa-2t}^+ e^{i(\varkappa-2t)z} + F_{\varkappa-2t}^- e^{-i(\varkappa-2t)z} \) holds.
Now, the corollary (except for the last assertion) follows by substituting this formula into
\eqref{eq:gen.trig.poly.pm.e} for even~$n$ or~\eqref{eq:gen.trig.poly.pm.o} for odd~$n$.
To obtain the last assertion, it is enough to calculate the difference of the plus and minus
cases of formula~\eqref{eq:sine.over.sine-odd.kappa} with~$\varkappa=2\lambda$.
\end{proof}
Note that, in particular, Corollary~\ref{cr:sine.over.sine} furnishes an expansion for the ratio of the form 
$$
\frac{\sin^p(z-\alpha)\cos^q(z-\gamma)}{\sin(z-\b)}
$$
by taking $r=p+q$, $a_1=\cdots= a_p=\alpha$, $a_{p+1}=\cdots=a_{r}=\gamma+\pi/2$ (cf.~\cite[eq.~(21)--(22)]{Chu}).

\begin{corollary}
    Suppose~$T_c(z)=\frac{1}{2}c_0+\sum_{t=0}^m c_t\cos(tz)$.
    Put~$\nu_{t,n}=\sign(t)\cdot\big(t+n-2\lfloor(t+n)/2\rfloor\big)$ as before, and retain the
    definition of $F^{\pm}_k$ from \eqref{eq:FE-defined} setting~$c_{-t}:=c_t$. Then
    \begin{equation*}
        \frac{T_c(z)}{\sin(z-\b)}
        =\frac{1}{2}\sum\nolimits_{k=1}^{m-n-1}\big(e^{ikz}F_k^{+}+e^{-ikz}F_k^{-}\big)
        \\
        +\sum_{k=0}^n\frac{\sum_{t=-m}^m 
            c_t \cos\big(tb_k+\nu_{t,n}(z-b_k)\big) }
        {2\sin(b_k-\b_{[k]})\sin(z-b_k)}
        .
    \end{equation*}

    Analogously, let~$T_s(z)=\sum_{t=1}^m c_t\sin(tz)$; then, for~$F^{\pm}_k$ as
    in~\eqref{eq:FE-defined} with~$c_{-t}:=-c_t$, the following is true:
    \begin{equation*}
        \frac{T_s(z)}{\sin(z-\b)}
        =\frac{1}{2i}\sum\nolimits_{k=1}^{m-n-1}\big(e^{ikz}F_k^{+}+e^{-ikz}F_k^{-}\big)
        +\sum_{k=0}^n\frac{\sum_{t=-m}^m 
            c_t \sin\big(tb_k+\nu_{t,n}(z-b_k)\big) }
        {2\sin(b_k-\b_{[k]})\sin(z-b_k)}.
    \end{equation*}
\end{corollary}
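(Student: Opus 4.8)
The plan is to deduce both identities directly from Theorem~\ref{th:new}, treating $T_c$ and $T_s$ as the exponential polynomials $T(z)=\sum_{t=-m}^m c_te^{itz}$ considered there. First I would use $\cos(tz)=\tfrac12(e^{itz}+e^{-itz})$ and $\sin(tz)=\tfrac1{2i}(e^{itz}-e^{-itz})$ to record that, under the stated conventions $c_{-t}:=c_t$ (cosine case) and $c_{-t}:=-c_t$ (sine case), one has $T_c(z)=\tfrac12\sum_{t=-m}^m c_te^{itz}$ and $T_s(z)=\tfrac1{2i}\sum_{t=-m}^m c_te^{itz}$, the antisymmetry forcing $c_0=0$ in the sine case. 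Applying~\eqref{eq:gen.trig.poly.m} to the bracketed exponential polynomial and multiplying the whole identity by $\tfrac12$ (resp.\ $\tfrac1{2i}$) reproduces the asserted prefactors $\tfrac12\sum_{k=1}^{m-n-1}(e^{ikz}F_k^++e^{-ikz}F_k^-)$ and $\tfrac1{2i}\sum_{k=1}^{m-n-1}(\cdots)$ verbatim, since the $F_k^\pm$ from~\eqref{eq:FE-defined} are common to both formulations.

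The core of the argument is to rewrite the numerator $\sum_{t=-m}^m c_te^{i(tb_k+\nu_{t,n}(z-b_k))}$ delivered by~\eqref{eq:gen.trig.poly.m} as the claimed real sum of cosines or sines. The key elementary fact I would isolate is the parity relation $\nu_{-t,n}=-\nu_{t,n}$ for $t\neq0$: the quantity $t+n-2\lfloor(t+n)/2\rfloor$ is the parity of $t+n$, which is unchanged under $t\mapsto-t$, whereas the factor $\sign(t)$ flips sign. Writing $\theta_t=tb_k+\nu_{t,n}(z-b_k)$, this gives $\theta_{-t}=-\theta_t$, so that pairing the indices $t$ and $-t$ collapses $c_te^{i\theta_t}+c_{-t}e^{i\theta_{-t}}$ into $2c_t\cos\theta_t$ in the symmetric case and into $2ic_t\sin\theta_t$ in the antisymmetric case. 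Thus, apart from the single unpaired index $t=0$, the exponential numerator equals $\sum_t c_t\cos\theta_t$ (resp.\ $i\sum_t c_t\sin\theta_t$), and dividing by $2$ (resp.\ $2i$) yields precisely the numerators in the statement. In the sine case $c_0=0$ removes the unpaired term and the proof is finished at this point.

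The one genuine obstacle is the unpaired index $t=0$ in the cosine case, where $c_0e^{i\nu_{0,n}(z-b_k)}$ is not literally $c_0\cos(\nu_{0,n}(z-b_k))$. For even $n$ one has $\nu_{0,n}=0$, both reduce to $c_0$, and nothing is needed; for odd $n$ one has $\nu_{0,n}=1$, and the total discrepancy over all the partial fractions is $\sum_{k=0}^n\frac{ic_0\sin(z-b_k)}{2\sin(b_k-\b_{[k]})\sin(z-b_k)}=\tfrac{ic_0}2\sum_{k=0}^n\frac1{\sin(b_k-\b_{[k]})}$. I would close the gap by proving that $\sum_{k=0}^n\big(\prod_{j\neq k}\sin(b_k-b_j)\big)^{-1}=0$ for odd $n$: substituting $\gamma_k=e^{ib_k}$ and $\sin(b_k-b_j)=\tfrac1{2i}(\gamma_k^2-\gamma_j^2)/(\gamma_k\gamma_j)$ turns this sum into $(2i)^n\big(\prod_j\gamma_j\big)\sum_{k=0}^n\gamma_k^{\,n-1}\big/\prod_{j\neq k}(\gamma_k^2-\gamma_j^2)$, and the last expression is the Lagrange-type sum $\sum_k x_k^{(n-1)/2}\big/\prod_{j\neq k}(x_k-x_j)$ in the nodes $x_k=\gamma_k^2$ with integer exponent $(n-1)/2\le n-1$, which vanishes by the standard symmetric-function identity (equivalently, it is the statement that the complete-homogeneous polynomial part in the Lemma's formula~\eqref{eq:w.basic.expression} is empty for such low degrees). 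This vanishing is the only step requiring real care; the rest is bookkeeping of the $\pm t$ pairing and of the overall $\tfrac12$ and $\tfrac1{2i}$ factors.
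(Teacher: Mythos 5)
Your proof is correct, but it takes a genuinely different route from the paper's. The paper symmetrizes at the level of the identity: it applies \eqref{eq:gen.trig.poly.m} to $T(z)=\sum_{t=-m}^m c_te^{itz}$, then performs the reflection $z\leftrightarrow-z$, $b_k\leftrightarrow-b_k$ (using that $E_j^\pm$ turns into $(-1)^{n+1}E_j^\mp$ under this substitution), and adds, respectively subtracts, the two resulting identities before dividing by $4$, respectively $4i$. In that scheme every index $t$ --- including $t=0$ --- is paired with its own mirror image, so the exponentials collapse into cosines/sines uniformly and no auxiliary fact is needed. You instead apply \eqref{eq:gen.trig.poly.m} once to the symmetrized coefficient sequence and pair $t$ with $-t$ inside a single identity via the antisymmetry $\nu_{-t,n}=-\nu_{t,n}$; this is sound, but it leaves $t=0$ unpaired, and you correctly isolate the resulting discrepancy in the cosine case with odd $n$, namely $\tfrac{ic_0}{2}\sum_{k=0}^n 1/\sin(b_k-\b_{[k]})$, which you then remove by proving the Lagrange-type vanishing $\sum_{k=0}^n\bigl(\prod_{j\ne k}\sin(b_k-b_j)\bigr)^{-1}=0$ when $n$ is odd; your reduction to $\sum_k x_k^{(n-1)/2}\big/\prod_{j\ne k}(x_k-x_j)=0$ with $x_k=\gamma_k^2$ and integer exponent $(n-1)/2\le n-1$ is complete and correct. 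So your route costs one extra (classical) lemma that the paper's reflection trick renders unnecessary, in exchange for staying within a single application of the theorem. Incidentally, your auxiliary lemma also follows from the paper's own machinery: take $m=0$ in \eqref{eq:ex0.trig.poly} with odd $n$ and subtract the cases $\vartheta=1$ and $\vartheta=-1$, which gives $0=2\cos(a)\sum_{k=0}^n 1/\sin(b_k-\b_{[k]})$. One cosmetic remark: your reading $T_c(z)=\tfrac12\sum_{t=-m}^m c_te^{itz}$ presumes the cosine sum in the statement starts at $t=1$ (the printed $\sum_{t=0}^m$ would double-count $c_0$); this is evidently a typo in the statement, and the paper's own proof makes the same interpretation, so it is not a defect of your argument.
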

\begin{proof}
Define
$$
T(z)=\sum_{t=-m}^{m}c_te^{itz},~~\text{so that}~T_c(z)=\frac{1}{4}(T(z)+T(-z)),~~~T_s=\frac{1}{4i}(T(z)-T(-z)).
$$
Observe that~$E_j^\pm$ after replacing~$b_k\leftrightarrow-b_k$ becomes~$(-1)^{n+1}E_j^\mp$, and hence formula~\eqref{eq:gen.trig.poly.m} on exchanging~$z\leftrightarrow-z$ and~$b_k\leftrightarrow-b_k$ turns into
    \begin{multline*}
    \frac{(-1)^{n+1}T(-z)}{\sin(z-\b)}
    \!=\!(-1)^{n+1}\sum\nolimits_{k=1}^{m-n-1}\sum\nolimits_{j=0}^{\lfloor\frac{m-n-k-1}{2}\rfloor}\left(c_{2j+k+n+1}E_j^- e^{-ikz} + c_{-(2j+k+n+1)}E_j^+ e^{ikz}\right)
    \\
    +\sum_{k=0}^n\frac{(-1)^{n+1}\sum_{t=-m}^{m}c_t e^{-itb_k-i\nu_{t,n}(z-b_k)}}
    {\sin(b_k-\b_{[k]})\sin(z-b_k)}.
    \end{multline*}
    The last identity divided by $(-1)^{n+1}$ and added to~\eqref{eq:gen.trig.poly.m} yields
    \begin{multline*}
    \frac{T(z)+T(-z)}{\sin(z-\b)}
    =\sum\nolimits_{k=1}^{m-n-1} \sum\nolimits_{j=0}^{\lfloor\frac{m-n-k-1}{2}\rfloor}(c_{2j+k+n+1}+c_{-(2j+k+n+1)})
    (E_j^+ e^{ikz} + E_j^- e^{-ikz})
    \\
    +
    2\sum_{k=0}^n\frac{\sum_{t=-m}^m 
        c_t \cos\big(tb_k+\nu_{t,n}(z-b_k)\big) }
    {\sin(b_k-\b_{[k]})\sin(z-b_k)}.
    \end{multline*}
    As~$c_{-t}=c_{t}$, the last identity divided by $4$ leads to the first assertion of the corollary. In a similar way, one obtains
    \begin{multline*}
    \frac{T(z)-T(-z)}{\sin(z-\b)}
    \!=\!\sum\nolimits_{k=1}^{m-n-1} \sum\nolimits_{j=0}^{\lfloor\frac{m-n-k-1}{2}\rfloor}(c_{2j+k+n+1}-c_{-(2j+k+n+1)})
    (E_j^+ e^{ikz} - E_j^- e^{-ikz})
    \\
    +    2i\sum_{k=0}^n\frac{\sum_{t=-m}^m 
        c_t \sin\big(tb_k+\nu_{t,n}(z-b_k)\big) }
    {\sin(b_k-\b_{[k]})\sin(z-b_k)}.
    \end{multline*}
    So, recalling that $c_{t}=-c_{-t}$ and dividing by $4i$ renders the proof of the second assertion.
\end{proof}


\begin{thebibliography}{9}

\bibitem{AF}  M.J.\:Ablowitz and A.S.\:Fokas, Complex variables: introduction and applications, 2nd ed., Cambridge Texts in Applied Mathematics, Cambridge University Press, Cambridge, 2003. \url{https://doi.org/10.1017/CBO9780511791246}

\bibitem{Braaksma}  B.L.J.\:Braaksma,  Asymptotic expansions and analytic continuation for a Class of Barnes integrals, Composito Math. 15, no.3(1962--64), 239--341. \url{http://eudml.org/doc/88877}

\bibitem{Calogero} F.\:Calogero, Classical many-body problems amenable to exact treatments, Lecture Notes in Physics: M66, Springer-Verlag (2001), Berlin. \url{https://doi.org/10.1007/3-540-44730-X}

\bibitem{Chu}W.\:Chu, Partial fraction decompositions and trigonometric sum identities,
Proceedings of the American Mathematical Society, Volume 136, Number 1, 2008, 229--237
 \url{http://dx.doi.org/10.1090/S0002-9939-07-09085-5}

\bibitem{Gustafson87} R.A.\:Gustafson, Multilateral summation theorems for ordinary and basic hypergeometric
series in $U(n)$, SIAM J. Math. Anal. 18 (1987), 1576--1596. \url{https://doi.org/10.1137/0518114}

\bibitem{Gustafson94} R.A.\:Gustafson, Some $q$-beta and Mellin-Barnes integrals on compact Lie groups and Lie
algebras, Trans. Amer. Math. Soc. 341 (1994), 69--119. \url{https://doi.org/10.1090/S0002-9947-1994-1139492-3}

\bibitem{Hermite}C.\,Hermite, Sur une identite trigonometrique, Nouv. Ann. Mat. 4 (1885), 57--59.

\bibitem{Johnson} W.P.\,Johnson, Trigonometric identies \'{a} la Hermite, 
The Amer. Math. Monthly, vol.\textbf{117}, 4(2010), 311--327.  \url{http://dx.doi.org/10.4169/000298910x480784}

\bibitem{KPSIGMA2016} D.\,Karp and E.\,Prilepkina, Hypergeometric differential equation and new identities for the coefficients of N{\o}rlund and B\"{u}hring, SIGMA 12 (2016), 052, 23 pages.  \url{http://dx.doi.org/10.3842/SIGMA.2016.052}


\bibitem{KPITSF2023} D.B.\,Karp and E.G.\,Prilepkina, On Meijer’s G function $G^{m,n}_{p,p}$ for $m+n=p$, Integral Transforms and Special Functions, Volume 34, Issue 1, 2023, 88--104. \url{https://doi.org/10.1080/10652469.2022.2092730}
 
\bibitem{Meijer} C.S.\:Meijer, On the G-function, I–VIII, Nederl. Akad. Wetensch. Proc. Ser. A. 49 (1946), 227–237, 344–356, 457–469, 632–641, 765–772, 936–943, 1063–1072, 1165–1175.

\bibitem{Mohlenkamp} M.J.\:Mohlenkamp, L.\:Monz\'{o}n, Trigonometric identities and sums of separable functions,
The Mathematical Intelligencer 27:2 (2005), 65--69. \url{https://doi.org/10.1007/BF02985795}

\bibitem{Norlund} N.E.\,N{\o}rlund, Hypergeometric functions, Acta Math. 94 (1955), 289–349. \url{https://doi.org/10.1007/BF02392494}

\bibitem{Prasolov}V.V.\:Prasolov, Polynomials, Algorithms and Computation in Mathematics 11, Springer, 2010. \url{https://doi.org/10.1007/978-3-642-03980-5}

\end{thebibliography}
\end{document}